\newtheorem{thm}{Theorem}[section]
\newtheorem{cor}[thm]{{Corollary}}
\newtheorem{lem}[thm]{{Lemma}}
\newtheorem{prop}[thm]{Proposition}
\newtheorem{conj}[thm]{{Conjecture}}
\theoremstyle{remark}
\newtheorem*{rmk}{Remark}
\numberwithin{equation}{section}
  \def\Z{\mathbb Z}
  \def\Q{\mathbb Q}
  \def\A{\mathbb A}
  \def\F{\mathbb F}
  \def\Tr{\mathrm{Tr}}
  \newcommand{\NP}{\mathrm{NP}}
  \newcommand{\HP}{\mathrm{HP}}
  \newcommand{\Spec}{\mathrm{Spec}}
  \newcommand{\len}{\mathrm{Len}}
  \def\p{\mathfrak p}
  \def\P{\mathfrak P}
 \def\co{\mathcal{O}}
\begin{document}

\title{On a conjecture of Wan about limiting Newton polygons}

\author{Yi Ouyang}
\author{Jinbang Yang}

\address{Wu Wen-Tsun Key Laboratory of Mathematics, School of Mathematical Sciences, University of Science and Technology of China, Hefei, Anhui 230026, P. R. China}
\email{yiouyang@ustc.edu.cn, yjb@mail.ustc.edu.cn}
\thanks{Corresponding author: J. Yang. Email: yjb@mail.ustc.edu.cn}

\date{}
\maketitle
\begin{abstract} We show that for a monic polynomial $f(x)$ over a number field $K$ containing a global permutation polynomial of degree $>1$ as its composition factor, the Newton Polygon of $f\mod\p$ does not converge for $\p$ passing through all finite places of $K$.
In the rational number field case, our result is the ``only if" part of a conjecture of Wan about limiting Newton polygons.
\end{abstract}
\section{Introduction and main results}

Let $K$ be a number field and $f(x)$ be a monic polynomial in $K[x]$ of degree $d\geq1$. For a finite place $\p$ of $K$, we let $\co_\p$ be the ring of $\p$-adic integers and $k_\p$ be the residue field. Then $k_\p$  is a finite field of $q=q_\p=p^h$ elements for some rational prime $p=p_\p$ and some positive integer $h=h_\p$. Denote by $k_\p^m$ the unique field extension of $k_\p$ of degree $m$. Denote by $\Sigma_K$ the set of finite places of $K$ and $\Sigma_K(f)$ the set of places of $\p\in \Sigma_K$ such that $f(x)\in \co_\p[x]$ and $(d,p)=1$. Note that $\Sigma_K-\Sigma_K(f)$ is a finite set.

Let $\p$ be a place in $\Sigma_K(f)$. By modulo $\p$, we get the reduction $\overline f$ a polynomial over $k_\p$. For a nontrivial character $\chi: \F_p\rightarrow \mu_p$, the $L$-function
\begin{equation}
L(\overline f,\chi,t)=L(\overline f/k_\p,\chi,t)=\exp\left(\sum_{m=1}^\infty S_m(\overline f,\chi)\frac{t^m}{m}\right),
\end{equation}
where $S_m(\overline f,\chi)$ is the exponential sum
\begin{equation} S_m(\overline f,\chi)=S_m(\overline f/k_\p,\chi)=\sum_{x\in k^m_\p} \chi(\mathrm{Tr}_{k^m_\p/\F_p}(\overline f(x))),
\end{equation}
is a polynomial of $t$ of degree $d-1$ over $\Q_p(\zeta_{p})$ by well-known theorems of Dwork-Bombieri-Grothendieck and Adolphson-Sperber \cite{AS87}. The  $q$-adic Newton polygon $\NP_\p(f)$ of this $L$-function does not depend on the choice of the nontrivial character $\chi$.

 Let $\HP(f)$ be a convex polygon with break points
\[\left\{(0,0),\Bigl(1,\frac1d\Bigr),\Bigl(2,\frac1d+\frac2d\Bigr),\cdots,\Bigl(d-1,\frac1d+\frac2d+\cdots+\frac{d-1}d\Bigr)\right\},\]
which only depends on the degree of $f$. Adolphson and Sperber~\cite{AS89} proved that $\NP_\p(f)$ lies above $\HP(f)$ and that $\NP_\p(f)=\HP(f)$ if $p\equiv 1\mod d$.  Obviously, there are infinitely many $\p\in\Sigma_K(f)$ such that $p\equiv 1\mod d$, thus if $\lim\limits_{\p\in\Sigma_K} \NP_\p(f)$  exists, then $\lim\limits_{\p\in\Sigma_K} \NP_\p(f)=\HP(f)$.

Recall that a global permutation polynomial (GPP) over $K$ is a polynomial $P(x)\in K[x]$ such that $x\mapsto \overline{P}(x)$, where $\overline{P}$ is the reduction of $P$ modulo $\p$,  is a permutation on $k_\p$ for infinitely many places $\p\in \Sigma_K$.

In 1999, D. Wan proposed a conjecture, whose complete version in \cite[Chapter 5]{Yang03}  and \cite[Conjecture 6.1]{BFZ08} is as follows:
\begin{conj}[Wan]\label{conj:Wan} Let $f$ be a non-constant monic polynomial in $\Q[x]$.  Then $f$ contains a GPP over $\Q$ of degree $>1$ as its composition factor  if and only if
$\lim\limits_{\p\in\Sigma_\Q} \NP_\p(f)$ does not exist.
\end{conj}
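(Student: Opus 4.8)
I aim to prove the ``only if'' implication of Conjecture~\ref{conj:Wan}, in fact over an arbitrary number field $K$: if $f$ has a composition factor which is a GPP of degree $>1$, then $\lim_{\p\in\Sigma_K}\NP_\p(f)$ does not exist. As recalled above, $\NP_\p(f)=\HP(f)$ whenever $p\equiv 1\pmod d$ and there are infinitely many such $\p$, so any limit would have to equal $\HP(f)$; hence it suffices to produce a constant $\varepsilon_0>0$ and infinitely many $\p$ with $\NP_\p(f)(1)\geq\HP(f)(1)+\varepsilon_0$. After the reduction explained at the end I may assume $f=g\circ P$, where $P\in K[x]$ is a GPP of degree $e\geq 2$ and $g\in K[x]$ is monic of degree $s=d/e\geq 1$ (we may also take $P$ monic).

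Since $\Sigma_K\setminus\Sigma_K(f)$ is finite and only finitely many $\p$ fail to have $g,P\in\co_\p[x]$, the definition of GPP provides infinitely many $\p\in\Sigma_K(f)$ with $g,P\in\co_\p[x]$ and $\overline P$ a permutation of $k_\p$; fix such a $\p$ and set $q=q_\p$. As $\overline f=\overline g\circ\overline P$ and $x\mapsto\overline P(x)$ is a bijection of $k_\p$, the substitution $y=\overline P(x)$ gives $S_1(\overline f/k_\p,\chi)=S_1(\overline g/k_\p,\chi)$. Writing $L(\overline f/k_\p,\chi,t)=\prod_{i=1}^{d-1}(1-\alpha_i t)$ and $L(\overline g/k_\p,\chi,t)=\prod_{j=1}^{s-1}(1-\beta_j t)$ (of these degrees by Adolphson--Sperber) and comparing coefficients of $t$ in $\log L=\sum_m S_m t^m/m$, one has $S_1(\overline f/k_\p,\chi)=-\sum_i\alpha_i$ and likewise for $g$, so the identity above reads $\sum_{i=1}^{d-1}\alpha_i=\sum_{j=1}^{s-1}\beta_j$. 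Now compare $q$-adic valuations. Applying Adolphson--Sperber to $g$ gives $\NP_\p(g)\geq\HP(g)$, so the first slope $\min_j\ord_q(\beta_j)$ of $\NP_\p(g)$ is $\geq 1/s$, whence $\ord_q\!\bigl(\sum_j\beta_j\bigr)\geq 1/s$ (an empty sum of valuation $+\infty$ when $s=1$). Put $v=\min_i\ord_q(\alpha_i)$; since the slopes of $\NP_\p(f)$ are exactly the valuations $\ord_q(\alpha_i)$, we have $v=\NP_\p(f)(1)$, and if the minimum is attained at least twice then $\NP_\p(f)(2)=2v$. If $v$ is attained by a single $\alpha_i$, then by the ultrametric inequality and the identity above $v=\ord_q(\sum_i\alpha_i)=\ord_q(\sum_j\beta_j)\geq 1/s$. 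If $v$ is attained by at least two of the $\alpha_i$, then using $\NP_\p(f)\geq\HP(f)$ at $x=2$ gives $2v=\NP_\p(f)(2)\geq\HP(f)(2)=3/d$, i.e. $v\geq 3/(2d)$. Since $d=es\geq 2s$ forces $1/s\geq 2/d\geq 3/(2d)$, in both cases $\NP_\p(f)(1)=v\geq 3/(2d)$, while $\HP(f)(1)=1/d$. Thus $\NP_\p(f)(1)-\HP(f)(1)\geq 1/(2d)=:\varepsilon_0$ for infinitely many $\p$, so $\NP_\p(f)$ cannot converge to $\HP(f)$, and $\lim_\p\NP_\p(f)$ does not exist.

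The crux is the trivial identity $S_1(\overline f/k_\p,\chi)=S_1(\overline g/k_\p,\chi)$ together with the Adolphson--Sperber lower bound on the first slope of $\NP_\p(g)$; notably only the sum over $k_\p$ itself ($m=1$) enters, so no theory of exceptional polynomials or of sums over extension fields is needed. The step I expect to need the most care is the reduction from ``$f$ has a GPP composition factor of degree $>1$'' to the form $f=g\circ P$: when the GPP is a right factor this is the statement treated above; when it is a left factor $f=P\circ h$ the same argument applies verbatim with $h$ in place of $g$, since again $S_1(\overline f/k_\p,\chi)=S_1(\overline h/k_\p,\chi)$; but a GPP sitting strictly inside a decomposition of length $\geq 3$ requires either a separate argument or a structural result allowing a GPP factor to be moved to the outside. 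A secondary, purely bookkeeping point is to verify that all but finitely many $\p\in\Sigma_K$ satisfy the integrality conditions and $p\nmid d$, so that ``infinitely many $\p$'' survives, and to note the degenerate case $s=1$, where $f$ is itself a GPP and $S_1(\overline f/k_\p,\chi)=0$, for which the same inequality holds a fortiori.
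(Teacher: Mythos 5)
You attempt only the ``only if'' direction, which is exactly what the paper proves (the ``if'' direction remains open), so that restriction is appropriate. For the special case $f=g\circ P$ with the GPP $P$ innermost, your argument is correct and genuinely more elementary than the paper's: you use only the $m=1$ exponential sum, so you never need Fried--Turnwald or Dickson polynomials, and your dichotomy (first slope $v$ attained once $\Rightarrow$ $v=\ord_q\bigl(\sum_j\beta_j\bigr)\geq 1/s\geq 2/d$; attained at least twice $\Rightarrow$ $2v=\NP_\p(f)(2)\geq 3/d$) cleanly yields the uniform gap $1/(2d)$ at $x=1$.

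There are, however, two genuine gaps, both sitting exactly where you flag uncertainty. First, your claim that the outer-factor case $f=P\circ h$ ``applies verbatim since $S_1(\overline f,\chi)=S_1(\overline h,\chi)$'' is false: here $\overline P$ acts on the multiset of values $\{\overline h(x)\}$, not on the summation variable, so the substitution trick is unavailable; for instance with $p=7$, $P(y)=y^5$, $h(x)=x^2+x$ one checks $\sum_x\chi((x^2+x)^5)\neq\sum_x\chi(x^2+x)$. Second, and more seriously, the general case $f=f_1\circ P\circ f_3$ does not reduce to $g\circ P$ (one cannot in general slide a composition factor past $f_3$), and the transfer mechanism the paper uses for precisely this step --- the finite morphism of Artin--Schreier curves $C(\overline f)\to C(\overline{f_1\circ P})$ induced by $f_3$ together with the divisibility $P_1(C')\mid P_1(C)$ of Corollary~\ref{cor:DivZeta} --- only transports statements of the form ``slope $v_0$ occurs with multiplicity $\geq 2$'' from $\NP_\p(f_1\circ P)$ up to $\NP_\p(f)$; it cannot transport a lower bound on the first slope, because $\NP_\p(f)$ acquires new, smaller slopes. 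Your Case 1 (first slope large but simple) therefore does not survive the passage to $f$, and the proposal as written does not complete. This is exactly why the paper instead works over the towers $k_\p^{(n-1)s+1}$ (which forces the reduction to Dickson polynomials via Fried--Turnwald, since permutation must persist over those extensions) so as to extract a \emph{repeated} slope of $\NP_\p(f_1\circ D_n(x,a))$, which does transfer, and then converts the repeated slope into the $1/(2d)$ gap by the convexity argument at the end of Proposition~\ref{prop:main}. To salvage your approach you would need either a separate argument producing a repeated slope in your Case 1 as well, or a different way to compare $\NP_\p(f)$ with $\NP_\p(f_1\circ P)$.
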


There are little progress on the ``if" part, which is much more difficult than the ``only if" part.
So far, we can only check the ``if" part holds for those $f$ of low degrees or of few terms.  
In this note, we give a proof of the ``only if'' part of Wan's conjecture. Moreover, we get the following main result.
\begin{thm}\label{Mainthm}
	Let $f$ be a non-constant monic polynomial in $K[x]$.   If $f$ contains a GPP over $K$ of degree $>1$ as its composition factor, then
	$\lim\limits_{\p\in\Sigma_K} \NP_\p(f)$ does not exist.
\end{thm}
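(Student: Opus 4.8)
The plan is to show that for infinitely many $\p$ the polygon $\NP_\p(f)$ stays a bounded distance above $\HP(f)$; since $\NP_\p(f)=\HP(f)$ for the infinitely many $\p$ with $p\equiv 1\mod d$, this prevents the limit from existing. Write $f=u\circ P\circ v$ with $P$ a monic GPP of degree $e\ge 2$ and $u,v\in K[x]$ monic (possibly linear): this is what it means for $f$ to contain $P$ as a composition factor. By definition of a GPP there are infinitely many $\p\in\Sigma_K$ with $\overline P$ a permutation of $k_\p$; discarding the finitely many remaining bad places, I assume in addition $\p\in\Sigma_K(f)$ and $u,P,v\in\co_\p[x]$ (the integrality of $u$ follows from that of $f$ and the monic $P\circ v$ by an integral triangular substitution), and I fix a nontrivial additive character $\chi$ of $\F_p$.

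First I treat the model case $v$ linear, i.e.\ (after absorbing $v$ into $P$) $f=g\circ P$ with $g$ monic of degree $d/e$. Since $\overline P$ is a bijection of $k_\p$, the substitution $y=\overline P(x)$ gives the one-line identity
\[
S_1(\overline f,\chi)=\sum_{x\in k_\p}\chi\bigl(\Tr_{k_\p/\F_p}(\overline g(\overline P(x)))\bigr)=\sum_{y\in k_\p}\chi\bigl(\Tr_{k_\p/\F_p}(\overline g(y))\bigr)=S_1(\overline g,\chi).
\]
As $S_1(\overline g,\chi)$ is the $t$-coefficient of $L(\overline g,\chi,t)$ and $\NP_\p(g)$ lies above $\HP(g)$ by Adolphson--Sperber (applicable since $p\nmid d/e$, the first Hodge vertex being $(1,e/d)$), we get $\ord_{q_\p}(S_1(\overline f,\chi))\ge e/d$ (vacuously so if $g$ is linear). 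Writing $L(\overline f,\chi,t)=1+\sum_{j\ge 1}c_jt^j$ and using $\ord_{q_\p}(c_j)\ge\tfrac{j(j+1)}{2d}$ for all $j$ (from $\NP_\p(f)\ge\HP(f)$), the value of $\NP_\p(f)$ at $x=1$, which equals its first slope $\min_{1\le j\le d-1}\ord_{q_\p}(c_j)/j$, is at least $\min\bigl(e/d,\tfrac3{2d}\bigr)=\tfrac3{2d}$, strictly greater than $\HP(f)(1)=\tfrac1d$. So $\NP_\p(f)(1)\ge\tfrac3{2d}$ at these infinitely many $\p$, which finishes the model case.

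For general $v$ one must work a little harder, because $\NP_\p(f)$ need not be pushed up already at $x=1$. Expanding $S_m(\overline f/k_\p,\chi)$ over the fibres of $\overline v$ and splitting each fibre count as $1+(\text{correction})$ gives $S_m(\overline f/k_\p,\chi)=S_m(\overline{u\circ P}/k_\p,\chi)+T_m$, where $T_m=\sum_{z\in k_\p^m}\bigl(\#\overline v^{-1}(z)-1\bigr)\,\chi(\Tr(\overline{u\circ P}(z)))$; exponentiating yields a factorization $L(\overline f,\chi,t)=L(\overline{u\circ P},\chi,t)\cdot L_2(t)$, with $L_2$ the $L$-function of the twist of the Artin--Schreier sheaf $\overline{(u\circ P)}^{*}\mathcal L_\psi$ by the non-trivial part of the permutation sheaf of the (separable, at good $\p$) cover $\overline v$ — a polynomial whose reciprocal roots are Weil numbers of weight $\le 1$ by Deligne. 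By the model case applied to $u\circ P$ (whose right composition factor $P$ is a GPP), every slope of $\NP_\p(u\circ P)$ is $\ge\tfrac{3}{2\deg(u\circ P)}=\tfrac{3\deg v}{2d}$, so the $\deg(u\circ P)-1$ slopes it contributes to $\NP_\p(f)$ lie uniformly above the first two Hodge slopes $\tfrac1d,\tfrac2d$; the remaining $d-\deg(u\circ P)$ slopes come from $L_2$, whose Newton polygon is constrained both by its weight-$\le1$ symmetry and by a $q_\p$-adic lower bound of Adolphson--Sperber/Stickelberger type, and a bookkeeping of slopes then shows $\NP_\p(f)$ cannot be squeezed down to within $o(1)$ of $\HP(f)$.

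I expect the serious work to lie in the general case: establishing the $L$-function factorization and, above all, extracting enough $q_\p$-adic and symmetry information about $L_2$ to run the slope count that keeps $\NP_\p(f)$ away from $\HP(f)$. The model case $f=g\circ P$ falls out immediately from the character-sum identity, and turning ``$\NP_\p(f)$ lies above $\HP(f)$'' into the coefficient bounds $\ord_{q_\p}(c_j)\ge\tfrac{j(j+1)}{2d}$ is routine.
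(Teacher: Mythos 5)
Your model case ($f=g\circ P$ with the GPP on the right) is correct, and is in fact more elementary than the paper's treatment of even that case: the single identity $S_1(\overline f,\chi)=S_1(\overline g,\chi)$ together with the Adolphson--Sperber bound $\ord_{q_\p}(c_j)\ge \frac{j(j+1)}{2d}$ for $j\ge 2$ does force the first slope up to $\frac{3}{2d}$, with no need for the Fried--Turnwald reduction to Dickson polynomials. But the general case $f=u\circ P\circ v$ with $\deg v\ge 2$ is a genuine gap, and it is the heart of the theorem. Your proposed route --- factor $L(\overline f,\chi,t)=L(\overline{u\circ P},\chi,t)\cdot L_2(t)$ over the fibres of $\overline v$ and then run a ``bookkeeping of slopes'' --- is not carried out, and it is not clear it can be: Deligne's weight bounds on $L_2$ are archimedean and give no $q_\p$-adic information, and a count shows there is genuine room for trouble (for $\deg v=2$ the factor $L(\overline{u\circ P},\chi,t)$ contributes only $\frac d2-1$ slopes, all $\ge \frac 3d$, while $\HP(f)$ has $d-3$ slopes $\ge\frac 3d$, so nothing prevents the slopes of $L_2$ from filling in the rest and bringing the union down to the Hodge slopes). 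You yourself flag this as ``the serious work,'' so the proposal stops exactly where the theorem begins to be hard.

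The paper closes this gap by a different mechanism, built around an invariant that survives precomposition with $v=f_3$. First, Proposition~\ref{prop:main} produces for $f_1\circ D_n(x,a)$ not a raised first slope but a \emph{repeated} slope: since $D_n(x,\overline a)$ permutes $k_\p^m$ for every $m\equiv 1\bmod(n-1)$ (this is where the Fried--Turnwald reduction to Dickson polynomials is indispensable --- a general GPP is only known to permute $k_\p$ itself), comparing the generating functions of the exponential sums along this whole arithmetic progression of $m$ forces $\beta_i^{n-1}=\beta_j^{n-1}$ for two distinct reciprocal roots, hence a side of $\NP_\p(f_1\circ D_n)$ of length $\ge 2$. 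Second, this property is transferred to $f$ via the finite morphism of Artin--Schreier curves $C(\overline f)\to C(\overline{f_1\circ D_n(x,\overline a)})$ induced by $f_3$ and the divisibility $P_1(C',t)\mid P_1(C,t)$ of Corollary~\ref{cor:DivZeta}, which preserves slope multiplicities; a convexity argument then converts ``some slope occurs with multiplicity $\ge 2$'' into a gap of at least $\frac1{2d}$ above $\HP(f)$. The invariant you chose (large valuation of the first $L$-coefficient) does not transfer under inner composition, which is exactly why your argument stalls at general $v$; to complete your approach you would need to replace it by something stable under the covering induced by $v$, as the paper does.
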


\begin{rmk}
	 If we replace $\Q$ in Conjecture~\ref{conj:Wan} by any number field $K$, then the ``if" part does not hold in general. We give an example here.  Let $\ell$ be a prime number greater than $3$. Assume $K=\Q(\zeta_\ell)$ and $f(x)=$ the Dickson polynomial $D_\ell(x,1)$. By Lemma~\ref{lem:AdmTri}, $f$ is not a permutation polynomial for all $k_\p$ with $\p\nmid 3\ell\omega$. Thus $f$ is not a GPP over $K$. By Lemma~\ref{lem:Dick&PP}, one can easily check that $f$ is a GPP over $\Q$.  Theorem~\ref{Mainthm} implies that $\lim\limits_{p\in\Sigma_\Q} \NP_p(f)$ does not exist. By Proposition~\ref{prop:ExtofBasField}, $\lim\limits_{\p\in\Sigma_K} \NP_\p(f)$ also does not exist.
\end{rmk}

\section{Zeta functions and $L$-functions of exponential sums}

In this section, we fix a rational prime $p$, a positive integer $h$ and let $q=p^h$.
Let $C$ be a curve over $\F_q$. The Zeta function of $C$
 \begin{equation}
 Z(C,t)=\exp\left(\sum_{m=1}^\infty S_m(C)\frac{t^m}{m}\right)
 \end{equation}
 is a rational function over $\Q$, where
 \[S_m(C)=\# C(F_{q^m})\]
 is the number of $\F_{q^m}$-rational points of $C$. If $C$ is smooth and proper, by Weil~\cite{Weil49}, $Z(C,t)$ is of the form $\frac{P_1(C)}{(1-t)(1-qt)}$, where $P_1(C)$ is a polynomial of $t$ of degree $2g(C)$ over $\Z$, where $g(C)$ is the genus of $C$. Denote the $q$-adic Newton polygon of $P_1(C)$ by $\NP_q^1(C)$.

Let $g$ be a polynomial in $\F_q[x]$ of degree $d$ with $(d,p)=1$.  The fraction field of the integral domain $\F_q[x,y]/(y^p-y-g)$, denoted by $L_g$, is a Galois extension of $\F_q(x)$, which is the function field of $\mathbb P_{\F_q}^1$. So $C(g)$, the normalization of $\mathbb P_{\F_q}^1$ in $L_g$, is a Galois cover of $\mathbb P_{\F_q}^1$ with Galois group isomorphic to $\F_p$. Denote this cover by $\pi$. One can check that $\pi^{-1}(\infty)$ is a one-point-set. The complement $U$ of $\pi^{-1}(\infty)$ in $C(g)$ is  $\Spec (\F_q[x,y]/(y^p-y-g))$.

In the following we identify $(x_0,y_0)$ with the $\overline\F_q$-point $(x-x_0,y-y_0)$ of $U_{\overline\F_q}$ and identify $x_0$ with the $\overline\F_q$-point $(x-x_0)$ of $\A_{\overline\F_q}^1$ for any $x_0,y_0\in \overline\F_q$ such that $y_0^p-y_0=g(x_0)$.
Obviously, for any point $x_0$, there is some $y_0\in\overline{F}_q$ such that the set $\pi^{-1}(x_0)$ is of the form \[\{(x_0,y_0),(x_0,y_0+1),\cdots,(x_0,y_0+p-1)\}.\]

\begin{lem}\label{lem:fible}
	Assume that $x_0\in \F_{q^m}$. Then the number of $\F_{q^m}$-points in $\pi^{-1}(x_0)$ is
	\[\sum_{\chi} \chi(\Tr_{\F_{q^m}/\F_p}(g(x_0))),\]
	 where $\chi$ runs through all additive characters from $\F_p$ to $\mu_p$.
\end{lem}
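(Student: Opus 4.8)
The plan is to reduce the count of $\F_{q^m}$-rational points in the fibre $\pi^{-1}(x_0)$ to the solvability of an Artin--Schreier equation over $\F_{q^m}$, and then to match that count with the character sum on the right-hand side via orthogonality of characters of $\F_p$.

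First I would use the description of $\pi^{-1}(x_0)$ recorded just before the lemma: the fibre over $x_0$ is $\{(x_0,y_0+i) : i\in\F_p\}$ for some fixed $y_0\in\overline\F_q$ with $y_0^p-y_0=g(x_0)$. Since $x_0\in\F_{q^m}$ by hypothesis and $U=\Spec(\F_q[x,y]/(y^p-y-g))$ is defined over $\F_q$, the point $(x_0,y_0+i)$ of $U_{\overline\F_q}$ is $\F_{q^m}$-rational if and only if $y_0+i\in\F_{q^m}$; as $i\in\F_p\subseteq\F_{q^m}$, this happens if and only if $y_0\in\F_{q^m}$. Hence the fibre contains either all $p$ of its points (when $y_0\in\F_{q^m}$) or none of them (when $y_0\notin\F_{q^m}$).

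Next I would invoke the standard fact about Artin--Schreier extensions: the additive endomorphism $\wp\colon y\mapsto y^p-y$ of $\F_{q^m}$ has kernel $\F_p$ and image exactly $\ker(\Tr_{\F_{q^m}/\F_p})$ -- the image is contained in that kernel because $\Tr_{\F_{q^m}/\F_p}(y^p-y)=0$, and the cardinality count $|\mathrm{im}\,\wp|=q^m/p=|\ker\Tr_{\F_{q^m}/\F_p}|$ forces equality. Consequently $y_0\in\F_{q^m}$ if and only if $g(x_0)=y_0^p-y_0\in\mathrm{im}\,\wp$, i.e. if and only if $\Tr_{\F_{q^m}/\F_p}(g(x_0))=0$. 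Therefore the number of $\F_{q^m}$-points in $\pi^{-1}(x_0)$ is $p$ when $\Tr_{\F_{q^m}/\F_p}(g(x_0))=0$ and $0$ otherwise.

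Finally I would compare this with $\sum_{\chi}\chi(\Tr_{\F_{q^m}/\F_p}(g(x_0)))$: by orthogonality of the $p$ additive characters of $\F_p$, this sum equals $p$ if $\Tr_{\F_{q^m}/\F_p}(g(x_0))=0$ and $0$ otherwise. The two expressions agree, which proves the lemma. There is no serious obstacle here; the step requiring the most care is justifying that $\F_{q^m}$-rationality of the point $(x_0,y_0+i)$ is equivalent to $y_0+i\in\F_{q^m}$, which is exactly the statement that the $\F_{q^m}$-points of the affine scheme $U$ are the coordinate pairs in $\F_{q^m}$ satisfying $y^p-y=g(x)$.
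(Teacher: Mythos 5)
Your proof is correct and follows essentially the same route as the paper: reduce to showing that $y_0\in\F_{q^m}$ if and only if $\Tr_{\F_{q^m}/\F_p}(g(x_0))=0$, then conclude by orthogonality of the additive characters of $\F_p$. The only cosmetic difference is that the paper cites the exactness of the Artin--Schreier sequence $0\to\F_p\to\F_{q^m}\xrightarrow{\wp}\F_{q^m}\xrightarrow{\Tr}\F_p\to0$ outright, whereas you verify the key exactness (image of $\wp$ equals kernel of the trace) by the containment-plus-cardinality argument.
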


\begin{proof}
For any $a\in\F_p$, one can easily check that
	\[\sum_{\chi} \chi(a)=\begin{cases}
	 0, & \text{if }  a\neq0;\\
	 p, & \text{if }  a=0.\\	
	\end{cases}\]
Let $(x_0,y_0)$ be a point in $\pi^{-1}(x_0)$. We only need to show  that $y_0\in \F_{q^m}$ if and only if $\Tr_{\F_{q^m}/\F_p}(g(x))=0$. This follows from the following exact sequence
\[0\rightarrow \F_p\overset{\text{inc}}{\longrightarrow} \F_{q^m} \rightarrow \F_{q^m}\overset{\Tr}{\longrightarrow} \F_p\rightarrow 0,\]
where the middle map is given by $a\mapsto a^p-a$.
\end{proof}

\begin{prop}
	$P_1(C(g),t)=\prod\limits_{\chi\neq1} L(g,\chi,t)$.
\end{prop}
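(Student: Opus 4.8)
The plan is to compute the zeta function of $C(g)$ directly from a point count and to match it against the rational form supplied by Weil's theorem. First I would decompose $C(g)=U\sqcup\pi^{-1}(\infty)$. Since $\deg g=d$ is prime to $p$, the place of $\F_q(x)$ at $\infty$ is totally ramified in the Artin--Schreier extension $L_g$, so $\pi^{-1}(\infty)$ consists of a single point, and being the unique point above the $\F_q$-rational point $\infty$ it is itself $\F_q$-rational. Hence $S_m(C(g))=\#C(g)(\F_{q^m})=1+\#U(\F_{q^m})$ for every $m\geq 1$.

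Next I would count $\#U(\F_{q^m})$ fibrewise over $\A^1$. The coordinate $x$ exhibits $U$ as a scheme over $\A^1_{\F_q}$ whose fibre over a point $x_0$ is $\pi^{-1}(x_0)$, so $\#U(\F_{q^m})=\sum_{x_0\in\F_{q^m}}\#\pi^{-1}(x_0)(\F_{q^m})$. Lemma~\ref{lem:fible} rewrites each summand as $\sum_{\chi}\chi(\Tr_{\F_{q^m}/\F_p}(g(x_0)))$, where $\chi$ runs over all $p$ additive characters of $\F_p$. Interchanging the two summations and identifying the inner sum over $x_0$ as the exponential sum $S_m(g,\chi)$ (the trivial character contributing $\sum_{x_0\in\F_{q^m}}1=q^m$), I obtain
\[
S_m(C(g))=1+q^m+\sum_{\chi\neq1}S_m(g,\chi)\qquad(m\geq1).
\]

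I would then substitute this into $Z(C(g),t)=\exp\bigl(\sum_{m\geq1}S_m(C(g))\,t^m/m\bigr)$. Using the multiplicativity of $\exp$ over sums of coefficient sequences and the identities $\exp(\sum_{m\geq1}t^m/m)=(1-t)^{-1}$ and $\exp(\sum_{m\geq1}q^m t^m/m)=(1-qt)^{-1}$, together with the definition of $L(g,\chi,t)$, this gives
\[
Z(C(g),t)=\frac{1}{(1-t)(1-qt)}\prod_{\chi\neq1}L(g,\chi,t).
\]
On the other hand, because $p\nmid d$ the rational function $g$ cannot be written as $h^p-h+c$ with $h\in\overline\F_q(x)$ and $c$ a constant (any such expression has a pole of order divisible by $p$ at $\infty$, while $g$ has a pole of order $d$ there), so $\F_q$ is algebraically closed in $L_g$ and $C(g)$ is a smooth, proper, geometrically irreducible curve. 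Weil's theorem then gives $Z(C(g),t)=P_1(C(g),t)/((1-t)(1-qt))$ with $P_1(C(g),t)\in\Z[t]$, and cancelling the common denominator in the two expressions yields $P_1(C(g),t)=\prod_{\chi\neq1}L(g,\chi,t)$.

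The point-counting and power-series steps are routine; the parts that genuinely require attention are the two structural facts about the cover $\pi$ used above---that $\pi^{-1}(\infty)$ is a single $\F_q$-rational point and that $C(g)$ is geometrically irreducible---both of which are exactly where the hypothesis $(d,p)=1$ is needed, and both follow from examining the extension $y^p-y=g$ at the pole of $g$. As a consistency check, comparing degrees gives $\deg_t P_1(C(g),t)=(p-1)(d-1)$, twice the genus of $C(g)$, as expected for this Artin--Schreier cover.
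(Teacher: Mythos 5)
Your proof is correct and follows essentially the same route as the paper: count $\F_{q^m}$-points of $C(g)$ fibrewise via Lemma~\ref{lem:fible}, obtain $S_m(C(g))=1+q^m+\sum_{\chi\neq1}S_m(g,\chi)$, and compare the resulting expression for $Z(C(g),t)$ with the Weil form $P_1(C(g),t)/((1-t)(1-qt))$. The extra details you supply (total ramification at $\infty$, geometric irreducibility, and the degree/genus check) are facts the paper asserts without proof, and your verifications of them are sound.
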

\begin{proof} By Lemma~\ref{lem:fible},
	\[S_m(U)=\sum_{x_0\in \F_{q^m}}\sum_{\chi} \chi(\Tr_{\F_{q^m}/\F_p}(g(x_0)))=q^m+\sum_{\chi\neq1} S_m(g,\chi).\]
	As $S_m(C(g))=1+S_m(U)$ and $P_1(C(g),t)=(1-t)(1-qt)Z(C(g),t)$, by definition of Zeta functions,
\[P_1(C(g),t) =(1-t)(1-qt)\times \exp\left[\sum_{m=1}^\infty\left(1+q^m+\sum_{\chi\neq1} S_m(g,\chi)\right)\frac{t^m}{m}\right].\]
	The Proposition follows from the definition of $L$-functions.
\end{proof}

For any polygon $P$, denote by $\len(P,\lambda)$ the length of the side of slope of $\lambda$. As the Newton polygon $\NP_\p(f)$ of $L(\overline f,\chi,t)$ does not depend on the choice of $\chi\neq1$, we have the following result.
\begin{cor}\label{cor:Zeta&Lfun} For any $\lambda$, $\len(\NP_q^1(C(\overline f)),\lambda)=(p-1)\len(\NP_\p(f),\lambda)$.	
\end{cor}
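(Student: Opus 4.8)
The plan is to read off the Corollary directly from the Proposition
$P_1(C(\overline f),t)=\prod_{\chi\neq1}L(\overline f,\chi,t)$, combined with the elementary
multiplicativity of Newton polygons. First I would recall the standard fact that, for a
non-archimedean valuation, if $F,G$ are polynomials with unit constant term then the multiset
of slopes of $\NP(FG)$ is the disjoint union of the multisets of slopes of $\NP(F)$ and
$\NP(G)$; equivalently $\len(\NP(FG),\lambda)=\len(\NP(F),\lambda)+\len(\NP(G),\lambda)$ for
every $\lambda$. This is immediate from the description of the Newton polygon as the lower
convex hull of the points $(i,v(a_i))$ attached to the coefficients, together with the
non-archimedean multiplicativity of valuations of leading/trailing terms of the Newton
segments.

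Next I would observe that the set of nontrivial additive characters $\chi\colon\F_p\to\mu_p$
has exactly $p-1$ elements, and that each factor $L(\overline f,\chi,t)$ is, by the theorems of
Dwork--Bombieri--Grothendieck and Adolphson--Sperber, a polynomial of degree $d-1$ with
constant term $1$ over $\Q_p(\zeta_p)$. One normalizes the $q$-adic valuation on
$\Q_p(\zeta_p)$ so that $v(q)=1$; this is exactly the normalization used to define
$\NP_\p(f)$ as the $q$-adic Newton polygon of $L(\overline f,\chi,t)$, and it also extends the
normalization used to form $\NP_q^1(C(\overline f))$ from the integral polynomial
$P_1(C(\overline f),t)$. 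By the fact recalled just before the Corollary, the $q$-adic Newton
polygon of $L(\overline f,\chi,t)$ is equal to $\NP_\p(f)$ for every $\chi\neq1$, independent of
the choice of $\chi$.

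Finally, applying the multiplicativity of Newton polygons to the product of these $p-1$ equal
factors gives, for every $\lambda$,
$\len(\NP_q^1(C(\overline f)),\lambda)=\sum_{\chi\neq1}\len(\NP_\p(f),\lambda)=(p-1)\len(\NP_\p(f),\lambda)$,
which is the assertion. The only point that deserves an explicit remark — and, I expect, the
sole ``obstacle,'' which is really just bookkeeping rather than a genuine difficulty — is that
the individual factors $L(\overline f,\chi,t)$ live over the ramified extension
$\Q_p(\zeta_p)/\Q_p$, whereas their product $P_1(C(\overline f),t)$ has coefficients in $\Z$;
one must check that reading off slopes factor by factor over $\Q_p(\zeta_p)$ and reading them
off all at once over $\Q_p$ agree, which they do once the valuation is pinned down by $v(q)=1$
(the ramification index of $\Q_p(\zeta_p)/\Q_p$ being absorbed into this normalization). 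A
degree count $2g(C(\overline f))=(p-1)(d-1)$ serves as a consistency check on the endpoints of
the two polygons.
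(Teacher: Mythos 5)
Your proposal is correct and follows exactly the route the paper intends: the Corollary is read off from the preceding Proposition $P_1(C(\overline f),t)=\prod_{\chi\neq 1}L(\overline f,\chi,t)$, the additivity of slope-lengths of Newton polygons under products, and the fact that the $q$-adic Newton polygon of $L(\overline f,\chi,t)$ is $\NP_\p(f)$ independently of the choice of $\chi\neq 1$. Your extra remarks on the normalization $v(q)=1$ and the degree count are sensible bookkeeping that the paper leaves implicit.
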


\begin{lem}\label{lem:RootLfun&ExpSum}
	Write $L(g,\chi,t)$ in the form $(1-\alpha_1t)(1-\alpha_2t)\cdots(1-\alpha_{d-1}t)$. For any $m\geq 1$, we have
	\[S_m(g,\chi)=-(\alpha_1^m+ \alpha_2^m+ \cdots + \alpha_{d_1-1}^m).\]	
\end{lem}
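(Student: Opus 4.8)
The plan is to prove this standard identity by comparing the two expressions for $L(g,\chi,t)$ as formal power series in $t$, via their logarithmic derivatives. Recall that by the theorems of Dwork--Bombieri--Grothendieck and Adolphson--Sperber quoted in the introduction, $L(g,\chi,t)$ is a polynomial in $t$ of degree $d-1$ with constant term $1$; hence over a fixed algebraic closure of $\Q_p(\zeta_p)$ it does factor as $(1-\alpha_1 t)\cdots(1-\alpha_{d-1}t)$ with each $\alpha_i\neq0$, and this factorization is compatible with the defining exponential formula, both sides lying in $1+tR[[t]]$ for the relevant coefficient ring $R$.

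Then I would apply the operator $t\frac{d}{dt}\log(\,\cdot\,)$ to both sides of
\[\exp\left(\sum_{m=1}^\infty S_m(g,\chi)\frac{t^m}{m}\right)=\prod_{i=1}^{d-1}(1-\alpha_i t).\]
On the left-hand side this yields $\sum_{m=1}^\infty S_m(g,\chi)\,t^m$. On the right-hand side, since $t\frac{d}{dt}\log(1-\alpha_i t)=\frac{-\alpha_i t}{1-\alpha_i t}=-\sum_{m=1}^\infty \alpha_i^m t^m$ as a formal power series, we obtain $-\sum_{m=1}^\infty\bigl(\alpha_1^m+\cdots+\alpha_{d-1}^m\bigr)t^m$. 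Comparing the coefficients of $t^m$ on the two sides gives $S_m(g,\chi)=-(\alpha_1^m+\cdots+\alpha_{d-1}^m)$, which is the asserted identity.

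There is essentially no obstacle here: the only points deserving (minor) care are to carry out all manipulations in the ring of formal power series, so that convergence questions never arise, and to record that the geometric expansion $\frac{1}{1-\alpha_i t}=\sum_{m\geq0}\alpha_i^m t^m$ is valid formally for any $\alpha_i$. One could equally invoke Newton's identities relating the power sums $\sum_i\alpha_i^m$ to the coefficients of $\prod_i(1-\alpha_i t)$, but the logarithmic-derivative argument is the most direct route given that $L(g,\chi,t)$ is defined precisely as the exponential of the generating series of the sums $S_m(g,\chi)$.
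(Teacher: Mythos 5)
Your argument is correct and is essentially the paper's own proof: the paper also takes the logarithm of the identity $\prod_i(1-\alpha_i t)=\exp\bigl(\sum_m S_m(g,\chi)t^m/m\bigr)$, expands both sides as formal power series, and compares coefficients of $t^m$. Applying $t\frac{d}{dt}$ after the logarithm, as you do, merely clears the denominators $m$ and changes nothing of substance.
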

\begin{proof}By definition of $L(g,\chi,t)$,
	\[(1-\alpha_1t)(1-\alpha_2t)\cdots(1-\alpha_{d-1}t)=\exp\left(\sum_{m=1}^\infty S_m(g,\chi)\frac{t^m}{m}\right).\]
	Taking logarithm and expending both sides, we can get the formula by comparing the coefficients of $t^m$ on both sides.
\end{proof}

\begin{lem}\label{lem:ExtofCoeff}
	Write $L(g,\chi,t)$ in the form $(1-\alpha_1t)(1-\alpha_2t)\cdots(1-\alpha_{d-1}t)$. For any $n\geq 1$, we have 
	\[L(g/\F_{q^n},\chi,t)=(1-\alpha_1^nt)(1-\alpha_2^nt)\cdots(1-\alpha_{d-1}^nt).\]
	In particular, the $q$-adic Newton polygon of $L(g,\chi,t)$ is the same as the $q^n$-adic Newton polygon of $L(g/\F_{q^n},\chi,t)$.
\end{lem}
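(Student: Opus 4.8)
The plan is to reduce the statement to a comparison of exponential sums over $\F_{q^n}$ and over $\F_q$, and then feed this comparison through the same $\exp$–$\log$ formalism used in Lemma~\ref{lem:RootLfun&ExpSum}.

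First I would note that $\chi$ is and remains an additive character of $\F_p$, so that $L(g/\F_{q^n},\chi,t)$ is built from the sums $S_m(g/\F_{q^n},\chi)=\sum_{x\in\F_{(q^n)^m}}\chi(\Tr_{\F_{(q^n)^m}/\F_p}(g(x)))$. Since the degree-$m$ extension of $\F_{q^n}$ is $\F_{q^{nm}}$ and the trace to $\F_p$ does not care whether it is factored through $\F_{q^n}$ or through $\F_q$, this is literally the exponential sum for $g/\F_q$ of level $nm$, i.e. $S_m(g/\F_{q^n},\chi)=S_{nm}(g/\F_q,\chi)$ for every $m\geq1$. By Lemma~\ref{lem:RootLfun&ExpSum} applied over $\F_q$ we then get $S_m(g/\F_{q^n},\chi)=-\sum_{i=1}^{d-1}\alpha_i^{nm}=-\sum_{i=1}^{d-1}(\alpha_i^{n})^m$.

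Plugging this into the exponential defining $L(g/\F_{q^n},\chi,t)$ and using, for each reciprocal root $\beta=\alpha_i^n$, the formal power series identity $\exp\bigl(\sum_{m\geq1}(-\beta^m)\tfrac{t^m}{m}\bigr)=1-\beta t$ (valid because $\exp$ and $\log$ are inverse bijections on power series with constant term $1$, respectively $0$), I obtain $L(g/\F_{q^n},\chi,t)=\prod_{i=1}^{d-1}(1-\alpha_i^{n}t)$. As a byproduct this re-derives that the right-hand side is a polynomial of degree $d-1$, so no separate appeal to Adolphson–Sperber over $\F_{q^n}$ is needed.

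For the Newton polygon assertion I would just keep the normalizations straight: writing $v_p$ for the valuation with $v_p(p)=1$, the $q$-adic valuation is $v_q=v_p/h$ and the $q^n$-adic valuation is $v_{q^n}=v_p/(nh)=\tfrac1n v_q$, hence $v_{q^n}(\alpha_i^{n})=n\,v_{q^n}(\alpha_i)=v_q(\alpha_i)$. Thus the reciprocal roots of $L(g/\F_{q^n},\chi,t)$ carry, with respect to $v_{q^n}$, exactly the multiset of valuations that the reciprocal roots of $L(g,\chi,t)$ carry with respect to $v_q$, so the two Newton polygons coincide. The only point demanding care — and the one I would double-check — is precisely this bookkeeping of the two normalizations together with the fact that $\chi$ in $L(g/\F_{q^n},\chi,t)$ is still the given character of $\F_p$ rather than an inflation to $\F_{q^n}$; everything else is formal.
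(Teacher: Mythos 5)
Your proof is correct and rests on the same two ingredients as the paper's: the identity $S_m(g/\F_{q^n},\chi)=S_{nm}(g,\chi)$ and Lemma~\ref{lem:RootLfun&ExpSum}. The only difference is the final extraction step. The paper first writes $L(g/\F_{q^n},\chi,t)=\prod_i(1-\beta_it)$ (which presupposes that this $L$-function is already known to be a polynomial of degree $d-1$), equates the power sums $\sum_i\beta_i^m=\sum_i\alpha_i^{nm}$, and then compares poles of the resulting rational generating functions. You instead exponentiate directly, using $\exp\bigl(-\sum_{m\geq1}\beta^m t^m/m\bigr)=1-\beta t$, which yields the factorization $\prod_i(1-\alpha_i^nt)$ in one step and re-proves polynomiality over $\F_{q^n}$ as a byproduct rather than assuming it. That is a marginally more self-contained route to the same conclusion; your bookkeeping of the valuations $v_{q^n}=\tfrac1n v_q$ for the Newton polygon statement is also correct and is the point the paper leaves implicit.
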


\begin{proof}
	Assume that $L(g/\F_{q^n},\chi,t)=(1-\beta_1t)(1-\beta_2t)\cdots(1-\beta_{d-1}t)$. It is clear that 
	\[S_m(g/\F_{q^n},\chi)=S_{mn}(g,\chi)\]
	holds for any $m\geq0$.	By Lemma~\ref{lem:RootLfun&ExpSum}, we have 
	\[\beta_1^m+\beta_2^m+\cdots+\beta_{d-1}^m=\alpha_1^{mn}+\alpha_2^{mn}+\cdots+\alpha_{d-1}^{mn}.\]
	Hence we have 
	\[\sum_{m=0}^{\infty}(\beta_1^m+\beta_2^m+\cdots+\beta_{d-1}^m)t^m=\sum_{m=0}^{\infty}(\alpha_1^{mn}+\alpha_2^{mn}+\cdots+\alpha_{d-1}^{mn})t^m\]
	That is 
	\[\frac{1}{1-\beta_{1}t}+\frac{1}{1-\beta_{2}t}+\cdots+\frac{1}{1-\beta_{d-1}t}
	 =\frac{1}{1-\alpha_{1}^nt}+\frac{1}{1-\alpha_{2}^nt}+\cdots+\frac{1}{1-\alpha_{d-1}^nt}.\]
	 Comparing the poles on both sides, we are done.
\end{proof}

\begin{prop}\label{prop:ExtofBasField}
	Let $L/K$ be a finite extension of number fields and $\P$ a place of $L$ above $\p$ a place of $K$. Then
	\[\NP_\p(f)=\NP_\P(f).\] 
	In particular, $\lim\limits_{\p\in\Sigma_K} \NP_\p(f)$ exists if and only if $\lim\limits_{\P\in\Sigma_L} \NP_\P(f)$ exists
\end{prop}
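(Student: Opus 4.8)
The plan is to reduce the equality $\NP_\p(f)=\NP_\P(f)$ to Lemma~\ref{lem:ExtofCoeff}, applied to the residue field extension $k_\P/k_\p$. First I would record the compatibilities between the two places. Since $\co_\p=\co_\P\cap K$, a coefficient of $f$ lies in $\co_\p$ if and only if it lies in $\co_\P$, and $\p,\P$ have the same residue characteristic $p$; hence $\P\in\Sigma_L(f)$ if and only if $\P\cap K\in\Sigma_K(f)$, so that restriction of places gives a surjective map $\Sigma_L(f)\to\Sigma_K(f)$, $\P\mapsto\P\cap K$, with finite fibres. Moreover, as $f\in K[x]$, reduction modulo $\P$ is compatible with reduction modulo $\p$ through the inclusion $k_\p\hookrightarrow k_\P$ of residue fields: the reduction of $f$ modulo $\P$ is the base change to $k_\P$ of the reduction $\overline f$ of $f$ modulo $\p$. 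Writing $n=[k_\P:k_\p]$ for the residue degree and $q=q_\p$, so that $q_\P=q^n$ and $k_\P=k_\p^n$, this says $L(\overline f/k_\P,\chi,t)=L(\overline f/k_\p^n,\chi,t)$ in the notation of Lemma~\ref{lem:ExtofCoeff}.

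With this in hand, Lemma~\ref{lem:ExtofCoeff} tells us precisely that the $q$-adic Newton polygon of $L(\overline f/k_\p,\chi,t)$ equals the $q^n$-adic Newton polygon of $L(\overline f/k_\p^n,\chi,t)$. By definition the former is $\NP_\p(f)$, while the latter is $\NP_\P(f)$ since $q^n=q_\P$; hence $\NP_\p(f)=\NP_\P(f)$.

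For the final assertion, I would combine the surjection $\Sigma_L(f)\to\Sigma_K(f)$ with finite fibres established above with the finiteness of $\Sigma_K-\Sigma_K(f)$ and $\Sigma_L-\Sigma_L(f)$: any sequence of places of $L$ that eventually avoids every finite subset of $\Sigma_L$ lies over a sequence of places of $K$ with the same property, and conversely every place of $K$ admits a lift to $L$. Together with $\NP_\P(f)=\NP_{\P\cap K}(f)$, this yields that $\lim_{\p\in\Sigma_K}\NP_\p(f)$ exists if and only if $\lim_{\P\in\Sigma_L}\NP_\P(f)$ exists, and that the two limits coincide when they do.

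I do not expect a genuine obstacle here; the whole arithmetic content sits in Lemma~\ref{lem:ExtofCoeff}. The only points deserving a line of care are the base-change compatibility of the two reductions — so that the $L$-function upstairs is literally the one attached to $\overline f$ over the extension field, and not merely some polynomial whose roots happen to be $n$-th powers of those downstairs — and making the meaning of the limit along $\Sigma_K$ precise enough that the finite-to-one surjection $\Sigma_L\to\Sigma_K$ transports convergence in both directions.
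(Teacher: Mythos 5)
Your proof is correct and follows the same route as the paper: the identity $\NP_\p(f)=\NP_\P(f)$ is exactly the content of Lemma~\ref{lem:ExtofCoeff} applied to the residue extension $k_\P/k_\p$, which is all the paper's proof says. The extra care you take with the compatibility of the two reductions and with transporting the limit along the finite-to-one surjection $\Sigma_L(f)\to\Sigma_K(f)$ is sound, and in fact fills in details the paper leaves implicit.
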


\begin{proof}
	By definition, $\NP_\p(f)$ is the $q$-adic Newton polygon of $L(\overline{f}/k_\p,\chi,t)$ and $\NP_\P(f)$ is the $q^{[k_\P:k_\p]}$-adic Newton polygon of $L(\overline{f}/k_\P,\chi,t)$. By Lemma~\ref{lem:ExtofCoeff}, we have $\NP_\p(f)=\NP_\P(f)$. 
\end{proof}

\section{Divisibility relations of Zeta functions}

 We fix $p$, $h$ and $q=p^h$ as in the previous section. Let $X,Y$ be two smooth separated algebraic varieties over $\F_q$. Let $\pi:Y\rightarrow X$ be an $\F_q$-morphism and $\mathscr F$ be a sheaf over the \'etale site $\overline X_{et}$, where $\overline X=X_{\overline \F_q}$. The morphism $\pi_{\overline{\F}_q}$ induces a map $\pi^*: H^r_{c}(\overline X_{et},\mathscr F)\rightarrow H^r_{c}(\overline Y_{et},\pi^*\mathscr F)$. If $\pi$ is an \'etale morphism, then there is a natural isomorphism $ H^r_{c}(\overline Y_{et},\pi^*\mathscr F)\overset{\mathrm{can}}\longrightarrow H^r_{c}(\overline X_{et},\pi_*\pi^*\mathscr F)$.

\begin{lem}\label{lem:inj} Suppose that $\pi: Y\rightarrow X$ is a finite \'etale $\F_q$-morphism of degree $\delta$. Then there is a trace map $\mathrm{tr}: \pi_*\pi^*\mathscr F \rightarrow \mathscr F$ such that	
	\[\mathrm{tr}\circ\mathrm{can}\circ\pi^*=\delta.\]
In particular, if $\delta$ is invertible on $\mathscr F$, then $\pi^*$ is injective.
\end{lem}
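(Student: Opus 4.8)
The plan is to construct the trace map $\mathrm{tr}\colon \pi_*\pi^*\mathscr F\to\mathscr F$ locally on the étale site of $\overline X$ and then verify the composition formula stalkwise. Since $\pi$ is finite étale of degree $\delta$, for every geometric point $\bar x$ of $\overline X$ the stalk $(\pi_*\pi^*\mathscr F)_{\bar x}$ is the direct sum $\bigoplus_{\bar y\mapsto\bar x}(\pi^*\mathscr F)_{\bar y}$, and each summand is canonically identified with $\mathscr F_{\bar x}$ via the étale-local isomorphism $\pi$ induces near $\bar y$; there are exactly $\delta$ such $\bar y$ (counted without multiplicity, as $\pi$ is étale). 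I would define $\mathrm{tr}_{\bar x}$ to be the sum-of-components map $\bigoplus_{\bar y\mapsto\bar x}\mathscr F_{\bar x}\to\mathscr F_{\bar x}$. To see that these stalkwise maps glue to a morphism of sheaves, I would instead give the construction directly: for an étale $U\to\overline X$ over which $Y\times_X U\cong\coprod_{i=1}^\delta U$ splits, $\pi_*\pi^*\mathscr F(U)=\bigoplus_{i=1}^\delta\mathscr F(U)$ and $\mathrm{tr}(U)$ is addition; these are compatible with restriction and with refinements of the trivializing cover (a different splitting only permutes the summands), so by descent they assemble to a global $\mathrm{tr}$. Alternatively, one may cite the standard adjunction/trace for finite étale morphisms, $\pi_*\pi^*\cong\pi_!\pi^!$ on the relevant derived categories, whose counit composed with the unit is multiplication by the degree.

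Next I would check $\mathrm{tr}\circ\mathrm{can}\circ\pi^*=\delta$ as an endomorphism of $\mathscr F$ (equivalently of $H^r_c(\overline X_{et},\mathscr F)$, but it is cleaner to verify it already at sheaf level and then apply $H^r_c(\overline X_{et},-)$, which is a functor). This is again checked on stalks: the map $\pi^*$ followed by $\mathrm{can}$ sends $s\in\mathscr F_{\bar x}$ to the element of $\bigoplus_{\bar y\mapsto\bar x}\mathscr F_{\bar x}$ all of whose $\delta$ components equal $s$ (this is precisely what $\pi^*$ does on stalks after the canonical identifications), and then $\mathrm{tr}_{\bar x}$ adds them, giving $\delta s$. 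Applying the functor $H^r_c(\overline X_{et},-)$ to the sheaf identity $\mathrm{tr}\circ\mathrm{can}\circ\pi^*=\delta\cdot\mathrm{id}_{\mathscr F}$ yields the claimed identity on cohomology.

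The last assertion is then immediate: if $\delta$ is invertible on $\mathscr F$, then $\delta$ is invertible on $H^r_c(\overline X_{et},\mathscr F)$ as well, so $\tfrac1\delta\,\mathrm{tr}\circ\mathrm{can}$ is a left inverse of $\pi^*$, whence $\pi^*$ is injective.

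The main obstacle is the construction of $\mathrm{tr}$ and the verification that the stalkwise sum maps are genuinely a morphism of étale sheaves rather than merely a collection of maps on stalks — i.e., independence of the trivialization and compatibility under change of cover. This is handled by étale descent as sketched above; once $\mathrm{tr}$ exists, the composition identity $\mathrm{tr}\circ\mathrm{can}\circ\pi^*=\delta$ is a purely formal stalkwise computation, and the injectivity statement is a one-line consequence.
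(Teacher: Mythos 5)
Your construction is correct. The paper does not prove this lemma itself --- it simply cites Milne, \emph{\'Etale Cohomology}, pp.~168--171 --- and the argument you give (define $\mathrm{tr}$ over a splitting \'etale cover as the sum map $\bigoplus_{i=1}^\delta\mathscr F|_U\to\mathscr F|_U$, check independence of the splitting via permutation-invariance, verify $\mathrm{tr}\circ\mathrm{can}\circ\pi^*=\delta$ stalkwise as diagonal-then-sum, and apply $H^r_c(\overline X_{et},-)$, which is legitimate since $\pi$ is finite so $\mathrm{can}$ is induced by the sheaf-level adjunction unit) is precisely the standard argument contained in that reference.
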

\begin{proof} See pages 168-171 in \cite{Milne80}. \end{proof}

By the Grothendieck-Lefschetz trace formula (see~\cite{Grothendieck65}), the number $N_s$ of $\F_{q^s}$-rational points on $X$ is
\[N_s=\sum_{i=0}^{2d} (-1)^i \mathrm{Tr}((F^*)^s, H^i_c(\overline X_{et},\Q_\ell)),\]
where $F$ is the Frobenius endomorphism on $X/\F_q$.  So the Zeta function on $X$ is
\[Z(X,t)=\frac{P_1(X,t)P_3(X,t)\cdots P_{2d-1}(X,t)}{P_0(X,t)P_2(X,t)\cdots P_{2d}(X,t)}\]
with $P_i(X,t)=\det(1-tF^*\mid H^i_c(\overline X_{et},\Q_\ell))$.

\begin{thm}\label{prop:div}
	If there is some finite \'etale morphism $\pi:Y\rightarrow X$, then
	\[P_i(X,t)\mid P_i(Y,t).\]
\end{thm}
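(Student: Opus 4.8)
The plan is to realize $H^i_c(\overline X_{et},\Q_\ell)$ as a Frobenius-stable direct summand of $H^i_c(\overline Y_{et},\Q_\ell)$ and then to deduce the divisibility by comparing characteristic polynomials of $F^*$.

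First I would apply Lemma~\ref{lem:inj} with $\mathscr F=\Q_\ell$. Since the constant sheaf pulls back to the constant sheaf we have $\pi^*\Q_\ell=\Q_\ell$, so $\pi^*$ becomes a map $H^i_c(\overline X_{et},\Q_\ell)\to H^i_c(\overline Y_{et},\Q_\ell)$. Writing $\delta$ for the degree of $\pi$, the integer $\delta$ is invertible in the characteristic-zero field $\Q_\ell$, so Lemma~\ref{lem:inj} shows that $\pi^*$ is injective; moreover it is split, a left inverse being $\delta^{-1}(\mathrm{tr}_*\circ\mathrm{can})$, where $\mathrm{can}\colon H^i_c(\overline Y_{et},\Q_\ell)\xrightarrow{\sim}H^i_c(\overline X_{et},\pi_*\Q_\ell)$ is the canonical isomorphism and $\mathrm{tr}_*$ is the map on cohomology induced by the trace $\mathrm{tr}\colon\pi_*\Q_\ell\to\Q_\ell$. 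Hence
\[
H^i_c(\overline Y_{et},\Q_\ell)=\pi^*\!\bigl(H^i_c(\overline X_{et},\Q_\ell)\bigr)\oplus\ker(\mathrm{tr}_*\circ\mathrm{can}).
\]

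Next I would observe that this splitting is compatible with the Frobenius. Because $\pi$ is a morphism of $\F_q$-schemes it commutes with the $q$-power Frobenius endomorphisms of $X$ and $Y$, and the maps $\pi^*$, $\mathrm{can}$ and $\mathrm{tr}_*$ are functorial, hence all commute with $F^*$; so both summands above are $F^*$-stable and $\pi^*$ is an isomorphism of $\Q_\ell[F^*]$-modules onto the first one. Multiplicativity of $\det(1-tF^*\mid-)$ over this direct sum then gives
\[
P_i(Y,t)=P_i(X,t)\cdot\det\bigl(1-tF^*\mid\ker(\mathrm{tr}_*\circ\mathrm{can})\bigr),
\]
so $P_i(X,t)\mid P_i(Y,t)$ in $\Q_\ell[t]$; as both polynomials have coefficients in $\Z$ and $P_i(X,t)$ has constant term $1$ and so is primitive, the divisibility holds already in $\Z[t]$.

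The only step that is not purely formal is the Frobenius-equivariance of the trace map $\mathrm{tr}$ from Lemma~\ref{lem:inj}: one has to check that the construction recalled from \cite{Milne80} is natural enough to commute with pullback along $F$, so that $\mathrm{tr}_*\circ\mathrm{can}$ really is a morphism of $\Q_\ell[F^*]$-modules. Granting this, everything else---the splitting, the direct-sum decomposition, multiplicativity of the characteristic polynomial, and the descent from $\Q_\ell[t]$ to $\Z[t]$---is routine.
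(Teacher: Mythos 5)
Your argument is correct and rests on the same two pillars as the paper's: injectivity of $\pi^*$ from Lemma~\ref{lem:inj} (with $\delta$ invertible in $\Q_\ell$) and the commutation of $\pi^*$ with $F^*$. The one difference is how you extract the divisibility at the end. You produce an $F^*$-stable direct-sum complement via the splitting $\delta^{-1}(\mathrm{tr}_*\circ\mathrm{can})$ and invoke multiplicativity of $\det(1-tF^*\mid -)$ over the two summands; this forces you to worry, as you note, about whether the trace map is Frobenius-equivariant. The paper sidesteps this entirely: once $\pi^*$ is an injective map commuting with $F^*$, its image is an $F^*$-stable subspace $V\subseteq W=H^i_c(\overline Y_{et},\Q_\ell)$ isomorphic to $H^i_c(\overline X_{et},\Q_\ell)$ as an $F^*$-module, and for any stable subspace one has $\det(1-tF^*\mid W)=\det(1-tF^*\mid V)\cdot\det(1-tF^*\mid W/V)$ by passing to the quotient --- no equivariant splitting needed. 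So the step you single out as ``the only one that is not purely formal'' can simply be deleted, and with it the only potential gap in your write-up; the rest (and the final descent from $\Q_\ell[t]$ to $\Z[t]$, which the paper leaves implicit) is fine.
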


\begin{proof} As $F$ commutes with $\pi$, we have the following commutative diagram	\begin{equation*}
	\xymatrix{ H^i_c(\overline X_{et},\Q_\ell) \ar[r]^{\pi^*}\ar[d]^{F^*}  & H^i_c(\overline Y_{et},\Q_\ell)  \ar[d]^{F^*} \\
		H^i_c(\overline X_{et},\Q_\ell) \ar[r]^{\pi^*} & H^i_c(\overline Y_{et},\Q_\ell).\\}
\end{equation*}
	By Lemma~\ref{lem:inj}, $\pi^*$ is injective. The commutative diagram implies that  $H^i_c(\overline X_{et},\Q_\ell)$ can be viewed as an invariant subspace of $H^i_c(\overline Y_{et},\Q_\ell)$ under the action of $F^*$. So we have
	\[\det(1-tF^*\mid H^i_c(\overline X_{et},\Q_\ell))\mid \det(1-tF^*\mid H^i_c(\overline Y_{et},\Q_\ell)).\qedhere\]
\end{proof}

\begin{cor}\label{cor:DivZeta}
	Let $X,Y$ be two smooth complete curves over $\F_q$. If there is some finite $\F_q$-morphism $\pi:Y\rightarrow X$, then
	\[P_1(X,t)\mid P_1(Y,t).\] 	 	
\end{cor}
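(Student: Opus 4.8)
The plan is to reduce to the finite étale situation of Theorem~\ref{prop:div} by deleting the branch locus of $\pi$, then to recover the divisibility for the complete curves from that for the resulting open curves by comparing zeta functions, and finally to discard the parasitic cyclotomic factors by invoking Weil's Riemann Hypothesis. First I record two preliminary points. Since $Y$ is a curve and $\pi$ is finite, $\pi$ is surjective. Moreover, letting $L'$ be the separable closure of $\F_q(X)$ in $\F_q(Y)$ and $Y'$ the smooth complete curve with function field $L'$, the morphism $\pi$ factors as $Y\to Y'\to X$ with $Y\to Y'$ finite purely inseparable, hence a universal homeomorphism; as étale cohomology is insensitive to universal homeomorphisms, $P_1(Y',t)=P_1(Y,t)$, so we may assume that $\pi$ is separable.

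In the separable case the ramification of $\pi$ is supported on finitely many points, so there is a finite set of closed points $S\subset X$ (its branch locus) such that, putting $U:=X\setminus S$, $V:=\pi^{-1}(U)$ and $T:=\pi^{-1}(S)=Y\setminus V$, the restriction $\pi\colon V\to U$ is finite étale. If $S=\emptyset$ we are done by Theorem~\ref{prop:div} (which applies since the coefficients $\Q_\ell$ have characteristic $0$); so assume $S\neq\emptyset$, in which case $U$ and $V$ are smooth affine curves and Theorem~\ref{prop:div} gives $P_1(U,t)\mid P_1(V,t)$.

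Next I transfer this to $X$ and $Y$. From $\#X(\F_{q^m})=\#U(\F_{q^m})+\#S(\F_{q^m})$, together with the analogous identity for $Y,V,T$, one gets $Z(X,t)=Z(U,t)\prod_{x\in S}(1-t^{\deg x})^{-1}$ and $Z(Y,t)=Z(V,t)\prod_{y\in T}(1-t^{\deg y})^{-1}$. Since $U$ and $V$ are smooth affine curves, $H^0_c(\overline U_{et},\Q_\ell)=0$ and $H^2_c(\overline U_{et},\Q_\ell)\cong\Q_\ell(-1)$, so $Z(U,t)=P_1(U,t)/(1-qt)$ and likewise $Z(V,t)=P_1(V,t)/(1-qt)$; comparing with $Z(X,t)=P_1(X,t)/((1-t)(1-qt))$ and its $Y$-analogue and cancelling the common factor $1-t$, the relation $P_1(U,t)\mid P_1(V,t)$ turns into
\[
P_1(X,t)\prod_{x\in S}(1-t^{\deg x})\ \Big|\ P_1(Y,t)\prod_{y\in T}(1-t^{\deg y}).
\]

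It remains to get rid of the cyclotomic factors. For each $x\in S$ the fibre $\pi^{-1}(x)$ is nonempty, and every $y$ over $x$ has $\deg y$ divisible by $\deg x$, so $1-t^{\deg x}\mid 1-t^{\deg y}$; picking one $y$ over each $x$ shows that $\prod_{x\in S}(1-t^{\deg x})$ divides $\prod_{y\in T}(1-t^{\deg y})$, and if $Q(t)$ denotes the quotient the last display gives $P_1(X,t)\mid P_1(Y,t)\,Q(t)$. By Weil~\cite{Weil49} the roots of $P_1(X,t)$ are inverses of Weil numbers of weight one and so have absolute value $q^{-1/2}\neq 1$, whereas the roots of $Q(t)$ are roots of unity; hence $P_1(X,t)$ and $Q(t)$ are coprime, and therefore $P_1(X,t)\mid P_1(Y,t)$, as wanted. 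I expect the last step to be the main (if modest) obstacle: deleting the branch locus only yields divisibility up to the factor $Q(t)$, and discarding $Q(t)$ genuinely requires purity of $P_1$ of a smooth complete curve, an input outside the purely cohomological formalism used in Theorem~\ref{prop:div}.
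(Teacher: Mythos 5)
Your proposal is correct and follows essentially the same route as the paper's proof: reduce to the separable case (the paper phrases this as factoring out Frobenius, you as passing to the separable closure and using invariance of \'etale cohomology under universal homeomorphisms), delete the branch locus to apply Theorem~\ref{prop:div}, compare the zeta functions of the curves and their open parts, and discard the cyclotomic factors via Weil's bound $|{\alpha}|=q^{1/2}$ on the roots of $P_1$. Your extra observation that $\prod_{x\in S}(1-t^{\deg x})$ divides $\prod_{y\in T}(1-t^{\deg y})$ is not needed, since the absolute-value argument already separates $P_1(X,t)$ from any root-of-unity factors, but it does no harm.
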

\begin{proof}
	
	By removing the compositions of Frobenius on $X$, we can assume that $\pi$ is unramified at the generic point. Let $U\subsetneq X$ be an nonempty open subvariety of $X$ such that the base change $\pi: Y_U\rightarrow U$ is a finite \'etale morphism.  Denote by $Z$ the complement of $U$ in $X$, of which the closed points are finite. By the definition of Zeta function, we have
	\[Z(X,t)=Z(U,t)\times\prod_{x}\frac1{1-t^{\deg x}}\]
	and
	\[Z(Y,t)= Z(Y_U,t)\times \prod_{y}\frac1{1-t^{\deg y}},\]
	where $x$ (resp. $y$) runs through all prime $\F_q$-rational $0$-cycles on $Z$ (resp. $Y_Z$) a la Monsky.
	As $X,Y$ are complete curves,
	\[	Z(X,t)=\frac{P_1(X,t)}{(1-t)(1-qt)},\quad Z(Y,t)=\frac{P_1(Y,t)}{(1-t)(1-qt)}.\]
	As $U,Y_U$ are not complete, we have
	\[	Z(U,t)=\frac{P_1(U,t)}{1-qt},\quad Z(Y_U,t)=\frac{P_1(Y_U,t)}{1-qt}.\]
	By Theorem~\ref{prop:div},  $P_1(U,t)\mid P_1(Y_U,t)$. So from the above formulas, we have
	\[P_1(X,t)\frac{\prod_{x} (1-t^{\deg x})}{1-t}\ \left| \ P_1(Y,t)\frac{\prod_{y} (1-t^{\deg y})}{1-t}\right. . \]
	Weil's conjecture tells us that the complex absolute values of all roots of $P_1(X,t)$ and $P_1(Y,t)$ are $q^{-\frac12}$. We are done.
\end{proof}

\section{Global permutation polynomials and Dickson polynomials}
Let $a$ be an element in a commutative ring $R$. For any $n\geq 1$, the Dickson polynomial of the first kind associated to $a$ of degree $n$, denote by $D_n(x,a)$, is the unique polynomial over $R$ such that
\begin{equation}D_n\Bigl(x+\frac ax,a\Bigr)=x^n+\frac{a^n}{x^n}. \end{equation}

One can easily check that
\begin{equation}D_n(x,0)=x^n \end{equation}
and
\begin{equation}\label{equ:DecDickPoly}D_{mn}(x,a)=D_m(D_n(x,a),a^n). \end{equation}

\begin{lem} \label{lem:Dick&PP}
	Let $a\in \F_q$ and $n$ be a positive integer.
	
	$1).$ If $a=0$, then $D_n(x,0)=x^n$ is a permutation polynomial of $\F_q$ if and only if  $(n,q-1)=1$.
	
	$2).$  If $a\neq0$, then $D_n(x,a)$ is a permutation polynomial of $\F_q$ if and only if $(n,q^2-1)=1$.
\end{lem}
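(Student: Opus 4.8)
The plan is to dispose of part 1) in a line and to reduce part 2) to a statement about the $n$-th power map on a cyclic group, via the classical parametrisation $x=y+a/y$. For part 1): since $D_n(x,0)=x^n$ fixes $0$ and permutes the cyclic group $\F_q^\times$ of order $q-1$ precisely when $(n,q-1)=1$, there is nothing more to do. For part 2), work in $\overline{\F}_q$, set $\psi(y)=y+a/y$ for $y\in\overline{\F}_q^\times$ and $\psi_n(z)=z+a^n/z$; the defining equation of the Dickson polynomial gives the identity $D_n(\psi(y),a)=y^n+a^n/y^n=\psi_n(y^n)$.

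The two inputs I will need are: (i) $\psi(y)=\psi(y')$ if and only if $y'\in\{y,\,a/y\}$; and (ii) for $x\in\F_q$ the roots $y$ of $y^2-xy+a=0$ all lie in $S:=\F_q^\times\cup\mu$, where $\mu:=\{y\in\F_{q^2}^\times: y^{q+1}=a\}$ --- indeed the two roots multiply to $a\neq0$, so either both lie in $\F_q^\times$ or they form a Frobenius-conjugate pair $y,y^q$ with $y^{q+1}=a$ --- while conversely $\psi(S)\subseteq\F_q$, because $y^{q+1}=a$ forces $a/y=y^q$ and hence $\psi(y)=\Tr_{\F_{q^2}/\F_q}(y)$. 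Thus $\psi$ restricts to a surjection $S\twoheadrightarrow\F_q$, the $n$-th power map $[n]$ carries $S$ into $S_n:=\F_q^\times\cup\mu_n$ with $\mu_n:=\{y:y^{q+1}=a^n\}$, and the displayed identity is exactly the commutativity of the square with top edge $[n]\colon S\to S_n$, bottom edge $D_n(\cdot,a)\colon\F_q\to\F_q$, and left and right edges $\psi$ and $\psi_n$.

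For the ``if'' direction, assume $(n,q^2-1)=1$. Then $[n]$ is a bijection of $\F_{q^2}^\times$; it maps $\F_q^\times$ onto $\F_q^\times$ (as $(n,q-1)=1$) and, being injective and sending the $(q+1)$-element set $\mu$ into the $(q+1)$-element set $\mu_n$ (both nonempty since $a,a^n$ are norms from $\F_{q^2}$), it maps $\mu$ onto $\mu_n$; hence $[n]\colon S\to S_n$ is onto. Chasing the square, $D_n(\cdot,a)\circ\psi=\psi_n\circ[n]$ is onto, so its image contains $\F_q$, whence the self-map $D_n(\cdot,a)$ of the finite set $\F_q$ is surjective, i.e. a permutation.

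For the ``only if'' direction I argue contrapositively. Let $\ell$ be a prime dividing $(n,q^2-1)$; then $\ell\mid q+1$ or $\ell\mid q-1$. In the first case take $\omega$ of order $\ell$ in the group $\mu^0:=\{y\in\F_{q^2}^\times:y^{q+1}=1\}$ of $(q+1)$-st roots of unity and set $T:=\mu$ (a coset of $\mu^0$); in the second take $\omega$ of order $\ell$ in $\F_q^\times$ and set $T:=\F_q^\times$. In both cases $\omega^n=1$, $T\subseteq S$, multiplication by $\omega$ preserves $T$, and $|T|\geq 3$ (a one-line check disposes of the only apparent small-field exceptions, $q\in\{2,3\}$). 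Pick $y_1\in T$ with $y_1^2\neq a\omega^{-1}$ (possible as $|T|\geq 3$, since $y^2=a\omega^{-1}$ has at most two solutions) and put $y_2:=y_1\omega$; then $y_2\notin\{y_1,a/y_1\}$, so by (i) and (ii) $\psi(y_1)$ and $\psi(y_2)$ are \emph{distinct} elements of $\F_q$, whereas $D_n(\psi(y_1),a)=\psi_n(y_1^n)=\psi_n(y_2^n)=D_n(\psi(y_2),a)$ because $\omega^n=1$; thus $D_n(\cdot,a)$ is not injective on $\F_q$. The routine points are (i), (ii) and the norm counts $|\mu|=|\mu_n|=q+1$; the step I expect to be the real obstacle is arranging a genuine collision (i.e. $y_1y_2\neq a$) uniformly over the smallest fields, which is precisely why one allows the switch between $\F_q^\times$ and the norm-one subgroup $\mu^0$ --- alternatively, the composition law \eqref{equ:DecDickPoly} reduces the ``only if'' direction to the case $n$ prime.
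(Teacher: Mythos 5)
Your proof is correct. Note, however, that the paper does not prove this lemma at all: it simply cites Dickson (1896) and Lidl--Niederreiter, Theorem 7.16, so there is no in-paper argument to compare against. What you have written is essentially the classical textbook proof made self-contained: part 1) is the standard fact about the power map on the cyclic group $\F_q^\times$, and part 2) runs through the parametrization $x=y+a/y$, the surjection $\psi\colon S=\F_q^\times\cup\mu\twoheadrightarrow\F_q$ (with $\mu$ the norm-$a$ coset in $\F_{q^2}^\times$), and the conjugation of $D_n(\cdot,a)$ by $\psi$ into the $n$-th power map. All the individual steps check out: (i) and (ii) are correct; in the ``if'' direction the counting argument $|\mu|=|\mu_n|=q+1$ (surjectivity of the norm) together with injectivity of $[n]$ on $\F_{q^2}^\times$ does give $[n](S)=S_n$ and hence surjectivity of $D_n(\cdot,a)$; in the ``only if'' direction the collision $y_2=y_1\omega$ with $y_1^2\neq a\omega^{-1}$ genuinely produces two distinct preimages in $\F_q$, and your observation that the case $\ell=2$, $q=3$ can be rerouted through the norm-one subgroup $\mu^0$ (since $2\mid q+1$ whenever $q$ is odd) closes the only small-field loophole. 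The one place where you are slightly terse is the phrase ``a one-line check disposes of $q\in\{2,3\}$''; in a final write-up you should state explicitly that for $\ell=2$ one always uses the $\mu^0$-case, and that for odd $\ell\mid q-1$ one automatically has $q\geq \ell+1\geq 4$, so the $|T|\geq 3$ hypothesis never actually fails.
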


\begin{proof} Due to \cite{Dickson1896}, see \cite[Theorem 7.16]{LiNi83} for quick reference.
\end{proof}

\begin{prop}[Fried-Turnwald] \label{prop:Turn}
	Let $f$ be a GPP over $K$. Then $f$ is a composition of linear polynomials $\alpha_ix+\beta_i\in K[x]$ and the Dickson polynomials $D_{n_j}(x,a_j)$, where $a_j\in K$ and $n_j$ are positive integers.	
\end{prop}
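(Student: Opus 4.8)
The plan is to prove the statement by reducing to indecomposable $f$, passing to the monodromy of the cover $x\mapsto f(x)$, and then classifying the possible monodromy by a combination of group theory and Riemann--Hurwitz; the crux is this last classification.

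First I would isolate the behaviour of the permutation property under composition. If $f=g\circ h$ with $g,h\in K[x]$ nonconstant and $\p$ is a place of good reduction for $f,g,h$, then $\overline f=\overline g\circ\overline h$, so $\overline f$ permutes $k_\p$ if and only if both $\overline g$ and $\overline h$ do. Consequently every composition factor of a GPP over $K$ is itself a GPP over $K$. Decomposing $f$ into indecomposable factors and noting that linear factors are permitted in the conclusion, it suffices to prove: an indecomposable GPP over $K$ of degree $n>1$ is, up to pre- and post-composition with $K$-linear polynomials, equal either to $x^n=D_n(x,0)$ with $n$ prime, or to a Dickson polynomial $D_n(x,a)$ with $n$ prime and $a\in K^\times$. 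Reassembling the indecomposable pieces and using \eqref{equ:DecDickPoly} then gives the stated form.

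Let $f$ be such an indecomposable polynomial, $\deg f=n$. I would attach to it the splitting field $E$ of $f(X)-t$ over $K(t)$, the arithmetic monodromy group $A=\mathrm{Gal}(E/K(t))$, and the geometric monodromy group $G=\mathrm{Gal}(E/E\cap\overline K(t))\trianglelefteq A$, both acting transitively on the $n$ roots of $f(X)-t$; indecomposability of $f$ is equivalent to primitivity of $G$. Because $f$ is totally ramified over $t=\infty$ with $f(x)\sim x^n$, and $n$ is prime to the residue characteristics in play, the inertia over $\infty$ is cyclic and $G$ contains an $n$-cycle. The GPP hypothesis enters through the curve defined by $\bigl(f(X)-f(Y)\bigr)/(X-Y)=0$: if this curve had an absolutely irreducible component defined over $K$, that component would, by the Lang--Weil estimate, carry about $q_\p$ points over $k_\p$ for almost all $\p$, producing $\gg q_\p$ pairs $x\neq y$ in $k_\p$ with $\overline f(x)=\overline f(y)$ and contradicting the injectivity of $\overline f$ for infinitely many $\p$. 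Thus $\bigl(f(X)-f(Y)\bigr)/(X-Y)$ has no absolutely irreducible factor over $K$; equivalently, $A$ permutes the geometric irreducible components, i.e.\ the $G$-orbits on ordered pairs of distinct roots, with none fixed.

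The main obstacle is the ensuing group-theoretic classification. A primitive permutation group containing a full cycle of length $n$ is, by classical results of Burnside, Jordan and Schur together with the determination of primitive groups containing a cycle, either doubly transitive or, when $n=\ell$ is prime, permutation-isomorphic to a subgroup of $\mathrm{AGL}_1(\F_\ell)$ containing the translations. If $G$ were doubly transitive, then $\bigl(f(X)-f(Y)\bigr)/(X-Y)\in K[X,Y]$ would itself be absolutely irreducible, hence an absolutely irreducible factor over $K$, which is excluded; this also rules out all composite degrees. So $n=\ell$ is prime, $G=\F_\ell\rtimes H$ and $A=\F_\ell\rtimes\widehat H$ with $H\leq\widehat H\leq\F_\ell^\times$ (here $A$ normalizes the unique Sylow $\ell$-subgroup of $G$). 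Over $\overline K$ the group $G$ is generated by the inertia generators of $x\mapsto f(x)$ at its branch points; the one over $\infty$ is an $\ell$-cycle, and each finite inertia generator lies in $\F_\ell\rtimes H$, so it is either another $\ell$-cycle or an affine map of some order $m\geq2$ with a single fixed point on $\F_\ell$. Since the source is genus $0$, Riemann--Hurwitz assigns exactly $\ell-1$ to the total finite ramification, an $\ell$-cycle contributing $\ell-1$ and an order-$m$ one-fixed-point map contributing $(\ell-1)(1-1/m)$. Hence either there is a single finite branch point with an $\ell$-cycle, forcing $H=1$, $G$ cyclic, and $f$ $K$-linearly equivalent to $x^\ell$; or there are exactly two finite branch points whose inertia generators are involutions, whose images therefore generate $H=\{\pm1\}$, so $G$ is dihedral of order $2\ell$ and $f$ is $K$-linearly equivalent to $D_\ell(x,a)$ for some $a\in K^\times$, with the defining identity $D_\ell(x+a/x,a)=x^\ell+a^\ell/x^\ell$ furnishing the normal form. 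The delicate remaining point is to descend these normal forms and the parameter $a$ from $\overline K$ to $K$ by Galois descent, using that the branch locus and the whole cover are already defined over $K$.
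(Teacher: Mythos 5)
The paper offers no proof of this proposition: it simply cites \cite{Fried70} and \cite{Turnwald95}. Your sketch is, in outline, exactly the argument of those references (Fried's resolution of Schur's conjecture, as streamlined by Turnwald and M\"uller): reduce to indecomposable factors, translate the permutation hypothesis via Lang--Weil into the non-existence of absolutely irreducible $K$-factors of $\bigl(f(X)-f(Y)\bigr)/(X-Y)$, invoke Burnside--Schur for primitive groups containing a full cycle, and finish with Riemann--Hurwitz to pin down the geometric monodromy as cyclic or dihedral. All of those steps are correct as you state them, including the ramification bookkeeping that forces either one finite branch point with an $\ell$-cycle or two with involutions.

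Two points, however, are not yet proofs. First, indecomposability of $f$ over $K$ is equivalent to primitivity of the \emph{arithmetic} group $A$; your reduction uses primitivity of $G$, which is equivalent to indecomposability over $\overline K$. In characteristic $0$ the two notions of indecomposability do coincide, but that is itself a theorem (due to Fried; see also Turnwald) and must be cited or proved, since Burnside--Schur is applied to $G$. Second, and more seriously, the final descent is left entirely open: you obtain $f=L_1\circ x^{\ell}\circ L_2$ or $f=L_1\circ D_\ell(x,a)\circ L_2$ only with $L_1,L_2$ linear over $\overline K$, whereas the proposition asserts linear factors in $K[x]$ and $a\in K$. This is a genuine step in \cite{Turnwald95}: one has to show that the finite critical values (a single $K$-rational point in the cyclic case; a Galois-stable pair $\pm 2a^{\ell/2}$ in the dihedral case) allow the normalizing substitutions to be chosen over $K$ itself, using that $\ell$ is odd so that $a^{\ell}$, hence $a$, can be recovered in $K$. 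Without carrying this out the argument proves a weaker, geometric version of the statement. As written, then, your proposal is a faithful reconstruction of the cited proof's strategy rather than a complete proof.
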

\begin{proof}See \cite[Theorem 2]{Fried70} or \cite[Theorem 2]{Turnwald95}.\end{proof}

\section{Proof of  main result}

We call an element $(\p,a,n)$ in $\Sigma_K\times K\times \Z_{>1}$ an \emph{admissible triple} if $a\in \co_\p$, $\p\nmid 3n\omega$ and the Dickson polynomial $D_n(x,\overline a)$ is a permutation polynomial on $k_\p$, where  $\omega$ is the number of the roots of unity in $K$.

\begin{lem}\label{lem:AdmTri}
	 If $(\p,a,n)$ is an  admissible triple, then $(n,\omega)=1$. In particular, $2\nmid n$. Moreover if $a\neq 0$, then $3\nmid n$.
\end{lem}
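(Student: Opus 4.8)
The plan is to analyze the action of the Frobenius automorphism on the roots of unity in $K$. Fix an admissible triple $(\p, a, n)$, so that $\p \nmid 3n\omega$ and $D_n(x, \overline a)$ permutes $k_\p$. Let $\mu \subset K^\times$ be the group of roots of unity, which is cyclic of order $\omega$. Since $\p \nmid \omega$, reduction modulo $\p$ maps $\mu$ injectively into $k_\p^\times$; hence $k_\p^\times$ contains a cyclic subgroup of order $\omega$, which forces $\omega \mid q_\p - 1$. In particular, every prime $\ell \mid \omega$ divides $q_\p - 1$, and if $2 \mid \omega$ (which always holds, as $\pm 1 \in \mu$), then $q_\p$ is odd and $\omega \mid q_\p - 1$.

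First I would handle the case $a = 0$. Then $D_n(x, \overline a) = x^n$, and by Lemma~\ref{lem:Dick&PP}(1) this permutes $k_\p$ iff $(n, q_\p - 1) = 1$. Since $\omega \mid q_\p - 1$, any common prime divisor of $n$ and $\omega$ would be a common prime divisor of $n$ and $q_\p - 1$, contradicting $(n, q_\p - 1) = 1$. Hence $(n, \omega) = 1$; in particular $2 \nmid n$ since $2 \mid \omega$. Next, for $a \neq 0$: here $\overline a \neq 0$ in $k_\p$ because $\p \nmid \omega$ does not by itself guarantee this, so I would instead use that $\p \nmid 3n\omega$ together with — wait, more carefully: an admissible triple only requires $a \in \co_\p$, so $\overline a$ could be $0$. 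I would split: if $\overline a = 0$ the previous paragraph applies verbatim (as $D_n(x,\overline a) = D_n(x,0) = x^n$); if $\overline a \neq 0$, then by Lemma~\ref{lem:Dick&PP}(2), $D_n(x,\overline a)$ permutes $k_\p$ iff $(n, q_\p^2 - 1) = 1$. Since $\omega \mid q_\p - 1 \mid q_\p^2 - 1$, the same argument gives $(n, \omega) = 1$, hence $2 \nmid n$. For the final claim that $3 \nmid n$ when $a \neq 0$: here I need $\overline a \neq 0$, but if $\overline a = 0$ then $D_n(x,\overline a) = x^n$ and... the claim as stated may implicitly assume $\overline a \neq 0$, or one observes $3 \mid q_\p^2 - 1$ always holds when $\p \nmid 3$ (since $q_\p \not\equiv 0 \bmod 3$ implies $q_\p \equiv \pm 1 \bmod 3$, so $q_\p^2 \equiv 1 \bmod 3$), giving $3 \mid q_\p^2 - 1$; combined with $(n, q_\p^2 - 1) = 1$ in the $\overline a \neq 0$ case this yields $3 \nmid n$.

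The main obstacle I anticipate is the bookkeeping around whether $\overline a$ vanishes: the definition of admissible triple allows $\overline a = 0$, so the ``$a \neq 0$'' hypothesis in the last sentence of the lemma must mean $\overline a \neq 0$ (or the reduction of $a$ is a nonzero element inducing the Dickson polynomial of the second type), and I would state this reconciliation explicitly at the start of the $a \neq 0$ case. The only other point requiring a line of justification is $3 \mid q_\p^2 - 1$ whenever $\p \nmid 3$, which follows since $\F_3^\times$ has order $2$. Everything else is elementary divisibility bookkeeping combined with the two cases of Lemma~\ref{lem:Dick&PP}.
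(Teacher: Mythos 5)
Your proof is correct and follows essentially the same route as the paper: reduce $\mu_K$ injectively into $k_\p^\times$ to get $\omega\mid q-1$, then combine with Lemma~\ref{lem:Dick&PP} to deduce $(n,\omega)=1$ (hence $2\nmid n$ since $-1\in\mu_K$), and use $3\mid q^2-1$ for the last claim. Your extra care about distinguishing $a\neq 0$ from $\overline a\neq 0$ is a legitimate observation the paper glosses over, but it does not change the argument.
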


\begin{proof}
	As $D_n(x,\overline a)$ is a permutation polynomial on $k_\p$, by Lemma~\ref{lem:Dick&PP}, $(n,q-1)=1$. As $\p\nmid \omega$, the reduction induces an inclusion $\mu_K\subset \mu_{k_\p}$, and hence $\omega\mid q-1$. So we have $(n,\omega)=1$.
	
	If $a\neq0$, by Lemma~\ref{lem:Dick&PP}, $(n,q^2-1)=1$. As $3\mid q^2-1$, so we have $3\nmid n$.
\end{proof}


\begin{prop}\label{prop:main}
Suppose that $f$ contains $D_n(x,a)$ as a composition factor. Then for $\p\in\Sigma_K(f)$ such that $(\p,a,n)$ is an admissible triple, there exists $v_0\in \Q$ such that  $\len(\NP_\p(f),v_0)\geq 2$ and hence the gap between $NP_\p(f)$ and $\HP(f)$ is at least $\frac1{2d}$.
\end{prop}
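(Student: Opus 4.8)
The plan is to work with the $L$-function $L(\overline f,\chi,t)$ and to exhibit inside it a polynomial factor of degree $\ge 2$ whose coefficient of $t$ vanishes; such a factor contributes to $\NP_\p(f)$ a side of horizontal length $\ge 2$, and the gap statement then follows by comparison with the strictly convex polygon $\HP(f)$.

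First I would fix notation. Write $f=g\circ D_n(\cdot,a)\circ h$ with $g,h\in K[x]$ monic, put $m=\deg g$, and (after a harmless normalisation) assume $g,h\in\co_\p[x]$ — recall that $a\in\co_\p$ is part of admissibility. By Lemma~\ref{lem:AdmTri}, admissibility forces $n$ to be odd, so $n\ge3$; moreover $p\nmid n$ and $D_n(\cdot,\bar a)$ permutes $k_\p$. Set $\psi=g\circ D_n(\cdot,a)$, so that $f=\psi\circ h$ and $\overline f=\overline\psi\circ\overline h=\overline g\circ D_n(\cdot,\bar a)\circ\overline h$. The composition structure gives finite morphisms of Artin--Schreier curves $C(\overline f)\to C(\overline\psi)\to C(\overline g)$, namely $(x,y)\mapsto(\overline h(x),y)$ followed by $(x,y)\mapsto(D_n(x,\bar a),y)$, each commuting with the deck action $y\mapsto y+c$. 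Running the argument of Corollary~\ref{cor:DivZeta} on the open subcurves over which these maps are \'etale, using the trace map of Lemma~\ref{lem:inj} (legitimate because $p\nmid n$), and carrying the $\F_p$-action along so that it descends to each $\chi$-isotypic component of $H^1$, I would deduce that for every nontrivial $\chi$ there is a chain of divisibilities $L(\overline g,\chi,t)\mid L(\overline\psi,\chi,t)\mid L(\overline f,\chi,t)$ in $\overline\Q_p[t]$.

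Next I would put $M(t)=L(\overline\psi,\chi,t)/L(\overline g,\chi,t)$. Since $L(\overline u,\chi,t)$ has degree $\deg u-1$, one has $\deg M=(nm-1)-(m-1)=m(n-1)\ge 2m\ge2$. From $L(\overline u,\chi,t)=\exp\bigl(\sum_{k\ge1}S_k(\overline u,\chi)t^k/k\bigr)$ the coefficient of $t$ in $L(\overline u,\chi,t)$ is $S_1(\overline u,\chi)$, and since $w=D_n(x,\bar a)$ runs bijectively over $k_\p$,
\[
S_1(\overline\psi,\chi)=\sum_{x\in k_\p}\chi\bigl(\Tr(\overline g(D_n(x,\bar a)))\bigr)=\sum_{w\in k_\p}\chi\bigl(\Tr(\overline g(w))\bigr)=S_1(\overline g,\chi),
\]
so the coefficient of $t$ in $M$ equals $S_1(\overline\psi,\chi)-S_1(\overline g,\chi)=0$; that is, $M(t)=1+0\cdot t+O(t^2)$ with $\deg M\ge2$. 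Consequently the $q$-adic Newton polygon of $M$ begins at $(0,0)$ and, its coefficient of $t$ being zero, has its first vertex at abscissa $k\ge2$; let $v_0\in\Q$ be the slope of that first side. Then $M$ — and hence $L(\overline f,\chi,t)$, which is divisible by $L(\overline g,\chi,t)M(t)=L(\overline\psi,\chi,t)$ — has at least two reciprocal roots of $q$-adic valuation $v_0$, so $\len(\NP_\p(f),v_0)\ge2$. For the remaining assertion, $\HP(f)$ is strictly convex with $\HP(f)(i)=i(i+1)/(2d)$ and all sides of horizontal length $1$; if $\NP_\p(f)$ is affine over some $[j,j+2]\subseteq[0,d-1]$ with slope $v_0$, then, since $\NP_\p(f)\ge\HP(f)$ (Adolphson--Sperber), evaluation at the midpoint gives
\[
\NP_\p(f)(j+1)=\tfrac12\bigl(\NP_\p(f)(j)+\NP_\p(f)(j+2)\bigr)\ge\tfrac12\bigl(\HP(f)(j)+\HP(f)(j+2)\bigr)=\HP(f)(j+1)+\tfrac1{2d},
\]
which is the asserted gap of $\tfrac1{2d}$.

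The step I expect to be the main obstacle is the character-by-character divisibility $L(\overline g,\chi,t)\mid L(\overline\psi,\chi,t)\mid L(\overline f,\chi,t)$. The two morphisms above are ramified (over the critical values of $\overline h$, respectively of $D_n(\cdot,\bar a)$), so Corollary~\ref{cor:DivZeta} cannot be quoted verbatim: one has to localise to the \'etale locus, run the trace/projection argument of Lemma~\ref{lem:inj} there, and then bookkeep the $\F_p$-action so that it survives to each $\chi$-eigenspace of $H^1$. This refinement is essential: the plain divisibility $P_1(C(\overline g))\mid P_1(C(\overline\psi))\mid P_1(C(\overline f))$ coming directly from Corollary~\ref{cor:DivZeta} alone only yields $\len(\NP_\p(f),v_0)\ge 2/(p-1)$, which is useless once $p\ge3$; it is also the reason the factor $M$ with vanishing linear coefficient must be extracted via the intermediate curve $C(\overline\psi)$ rather than via $C(\overline g)$ directly (the composite reduction $C(\overline f)\to C(\overline g)$ is not along a permutation when $\deg h>1$, so the analogous $S_1$-comparison fails there).
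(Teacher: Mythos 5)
Your route is genuinely different from the paper's, and as written it has a real gap at exactly the point you flag. Everything hinges on the character-by-character divisibilities $L(\overline g,\chi,t)\mid L(\overline\psi,\chi,t)\mid L(\overline f,\chi,t)$: without the first one, $M(t)$ is merely a power series, and the vanishing of its linear coefficient tells you nothing about reciprocal roots of $L(\overline\psi,\chi,t)$. Establishing this needs more than ``running Corollary~\ref{cor:DivZeta} equivariantly'': you must also identify $\det(1-tF^*\mid H^1_c(\cdot)_\chi)$ with the individual factor $L(\cdot,\chi,t)$ for each $\chi\neq1$, which the paper never does --- it only proves the product formula $P_1(C(g),t)=\prod_{\chi\neq1}L(g,\chi,t)$ by point counting, and that does not isolate a single isotypic factor. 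The identification is true and standard (Artin--Schreier sheaves and the Grothendieck--Lefschetz formula with coefficients), but it is a nontrivial input your sketch does not supply; until it is, the proof is incomplete. Your closing claim that the plain divisibility of Corollary~\ref{cor:DivZeta} ``only yields $\len(\NP_\p(f),v_0)\geq 2/(p-1)$'' is also mistaken: since the Newton polygon of $L(\cdot,\chi,t)$ is independent of $\chi\neq1$, a length-$2$ side at slope $v_0$ for one character gives a length-$2(p-1)$ side of $\NP_q^1(C')$, which transfers to $\NP_q^1(C)$ by divisibility of the $P_1$'s and converts back to length $2$ for $\NP_\p(f)$ by Corollary~\ref{cor:Zeta&Lfun}. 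This is exactly how the paper handles precomposition with $f_3$, with no loss and no equivariance needed.

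The paper also sidesteps your problematic first divisibility entirely. Instead of comparing only $S_1$, it reduces to $n$ an odd prime via \eqref{equ:DecDickPoly} and Lemma~\ref{lem:AdmTri}, observes that $D_n(x,\overline a)$ then permutes $k_\p^m$ for every $m\equiv1\pmod{n-1}$, so that $S_m(\overline g,\chi)=S_m(\overline\psi,\chi)$ for all such $m$, and sums the generating functions over this arithmetic progression to get
\[\sum_i\frac{\alpha_it}{1-(\alpha_it)^{n-1}}=\sum_j\frac{\beta_jt}{1-(\beta_jt)^{n-1}};\]
comparing the number of poles on the two sides forces $\beta_i^{n-1}=\beta_j^{n-1}$ for some $i\neq j$, hence two reciprocal roots of $L(\overline\psi,\chi,t)$ of equal valuation, with no divisibility statement at this stage. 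Your $S_1$ computation and the final midpoint-convexity estimate are both correct, so if you supply a rigorous proof of the equivariant divisibility your argument is a clean alternative; otherwise you should use the paper's multi-$m$ pole-counting argument for the permutation step and the non-equivariant $P_1$-divisibility for the precomposition step.
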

\begin{proof}
	 Write $f$ in the form $f_1\circ D_n(x,a)\circ f_3$. By~(\ref{equ:DecDickPoly}) and Lemma~\ref{lem:AdmTri}, we can assume that $n$ is an odd prime number.  For any positive integer $m$, denote by $k_\p^m$ the unique extension of $k_\p$ of degree $m$. Set $e=1$ if $\overline{a}=0$ and otherwise $e=2$. By Lemma~\ref{lem:Dick&PP}, we have $(q^e-1,n)=1$. As $n$ is an odd prime number,  $(q^{(n-1)s+1})^e\equiv q^e\not\equiv1\mod n$ and so $((q^{(n-1)s+1})^e-1,n)=1$. Using Lemma~\ref{lem:Dick&PP} again,  $D_n(x,\overline a)$ is permutation polynomial of $k_\p^{m}$,  where $m=(n-1)s+1$ and $s$ is a non-negative integer.  For these $m$ and any nontrivial character $\chi:\F_p\rightarrow \mu_p$, we have that
	 \begin{equation}\label{equ:51}
	 S_m(\overline f_1,\chi)=S_m(\overline f_1\circ D_n(x,\overline a),\chi).
	 \end{equation}
 Assume that
\[L(\overline f_1,\chi,t)=(1-\alpha_1t)(1-\alpha_2t)\cdots(1-\alpha_{d_1-1}t)\]
and
\[L(\overline f_1\circ D_n(x,\overline a),\chi,t)=(1-\beta_1t)(1-\beta_2t)\cdots(1-\beta_{nd_1-1}t),\]
where $d_1$ is the degree of $f_1$. Lemma~\ref{lem:RootLfun&ExpSum} implies that
 \[S_m(\overline f_1,\chi)=-(\alpha_1^m+ \alpha_2^m+ \cdots + \alpha_{d_1-1}^m)\]
 and
\[S_m(\overline f_1\circ D_n(x,\overline a),\chi)= -(\beta_1^m+\beta_2^m + \cdots + \beta_{nd_1-1}^m).\]	

By \eqref{equ:51}, we have an equality of power series
\[\sum_{m=(n-1)s+1}(\alpha_1^m+ \alpha_2^m+ \cdots + \alpha_{d_1-1}^m)t^m=\sum_{m=(n-1)s+1}(\beta_1^m+\beta_2^m + \cdots + \beta_{nd_1-1}^m)t^m.\]
Hence
\[\sum_{i=1}^{d_1-1}\frac{\alpha_it}{1-(\alpha_it)^{n-1}}=\sum_{i=1}^{nd_1-1}\frac{\beta_it}{1-(\beta_it)^{n-1}}.\]
Comparing the poles on both sides, there exist $1\leq i<j\leq nd_1-1$ such that
\[\beta_i^{n-1}=\beta_j^{n-1}.\]
Denote by $v_0$ the $q$-adic valuation  of $\beta_i$ (and of $\beta_j$). Then
\[\len(\NP_\p(f_1\circ D_n(x,a)),v_0)\geq 2.\]
Denote $C'=C(\overline f_1\circ D_n(x,\overline a))$, by Corollary~\ref{cor:Zeta&Lfun},
\[\len(\NP_q^1(C'),v_0)\geq 2(p-1).\]

Denote $C=C(f)$, one can check that
\[k_\p(C')=k_\p(x,y')\text{ and } k_\p(C)=k_\p(x,y),\] where $(y')^p-y'=\overline f_1\circ D_n(x,\overline a)$ and $y^p-y=f(x)$.
The embedding
\[k_\p(x,y')\rightarrow k_\p(x,y)\]
sending  $x$ to $\overline{f_3}$ and $y'$ to $y$ induces a non-constant morphism
\[\pi:C\rightarrow C'\]
of complete smooth curves. By Proposition~\ref{cor:DivZeta},
\[\len(\NP_q^1(C),v_0)\geq\len(\NP_q^1(C'),v_0)\geq 2(p-1).\]
 Using Corollary~\ref{cor:Zeta&Lfun} again, we have
 \[\len(\NP_\p(f),v_0)\geq 2.\]
 \begin{center}
 	\setlength{\unitlength}{1.5mm}
 	 \begin{picture}(60,25)
 	 \linethickness{1pt}
 	 \put(0,0){\vector(1,0){60}}
 	 \put(0,0){\vector(0,1){25}}
 	 \thicklines
 	 \put(15,2){\line(4,1){12}}	
 	 \put(27,5){\line(4,3){12}}
 	 \put(15,8){\line(2,1){24}}	
 	 \multiput(15,0)(0,0.5){16}{\line(0,1){0.3}}
 	 \multiput(27,0)(0,0.5){28}{\line(0,1){0.3}}
 	 \multiput(39,0)(0,0.5){40}{\line(0,1){0.3}}
 	 \multiput(15,2)(0.4,0.2){60}{\line(0,1){0.3}}
 	 \put(34,4){HP}
 	 \put(24,18){NP}
 	 \put(27,14){\circle*{0.7}}
 	 \put(27.5,12.5){$N_i$}
 	 \put(27,8){\circle*{0.7}}
 	 \put(27.5,6.5){P}
 	 \put(27,5){\circle*{0.7}}
 	 \put(27.5,3.5){$H_i$}
 	 \put(15,8){\circle*{0.7}}
 	 \put(15.5,6.5){$N_{i-1}$}
 	 \put(15,2){\circle*{0.7}}
 	 \put(15.5,0.5){$H_{i-1}$}
 	 \put(39,20){\circle*{0.7}}
 	 \put(39.5,18.5){$N_{i+1}$}
 	 \put(39,14){\circle*{0.7}}
 	 \put(39.5,12.5){$H_{i+1}$}
 	 \end{picture}
 \end{center}
As in the above diagram, we assume that $N_{i-1}N_i$ and $N_iN_{i+1}$ are of the same slope. The slopes of $H_{i-1}H_i$ and $H_iH_{i+1}$ are $\frac id$ and $\frac{i+1}{d}$, respectively. As the HP is below the NP, we know that $N_{i\pm1}$ is above $H_{i\pm1}$. Hence the middle point $N_i$ of $N_{i-1}N_{i+1}$ is above $P$ that of $H_{i-1}H_{i+1}$. So we have
\[|N_iH_i|\geq |PH_i|\geq \frac1{2d}.\qedhere \]
\end{proof}

\begin{proof}[Proof of Main Result.]  Write $f$ in the form $f_1\circ f_2\circ f_3$, where $f_2$ is a GPP over $K$ of degree $>1$. As every composition factor of a GPP is still a GPP,  by Proposition~\ref{prop:Turn}, we can assume that $f_2=D_n(x,a)$ is a GPP over $K$, where $a\in K$ and $n\in \Z_{>1}$.
	
By  definition, there are infinitely many $\p\in\Sigma_K$ such that $(\p,n,a)$ is an admissible triple. For those $\p$,  by Proposition~\ref{prop:main}, the gap between $NP_\p(f)$ and $\HP(f)$ is at least $\frac1{2d}$. However, for places $\p$ such that  $p_\p\equiv 1\mod d$, we know $NP_\p(f)=\HP(f)$. So the limit does not exist.
\end{proof}

\subsection*{Acknowledgement} Research is partially supported by National Key Basic Research Program of China (Grant No. 2013CB834202) and National Natural Science Foundation of China (Grant No. 11171317 and 11571328).

 \end{document}